\documentclass[11pt]{amsart}

\usepackage{amsmath,amsthm,amssymb,amssymb, paralist, xspace, graphicx, url, amscd, euscript, mathrsfs,stmaryrd,epic,eepic,color, longtable}


\usepackage[all]{xy}
\usepackage{amsmath,amscd}
\usepackage{psfrag}
\usepackage{epsfig}
\usepackage{graphicx,transparent}
\usepackage{enumerate}
\usepackage{caption}
\usepackage{here}
\SelectTips{cm}{}

\setlength{\textwidth}{5.7in}
\setlength{\oddsidemargin}{.4in}
\setlength{\evensidemargin}{.4in}

\usepackage[colorlinks=true]{hyperref}

\usepackage{pstricks}


\numberwithin{equation}{subsection}

\setcounter{tocdepth}1

\numberwithin{subsection}{section}

\allowdisplaybreaks[1]

%

\newtheorem*{namedtheorem}{\theoremname}
\newcommand{\theoremname}{testing}






\theoremstyle{plain}

\newtheorem{thm}{Theorem}[section]
\newtheorem{proposition}[thm]{Proposition}
\newtheorem{proposition-definition}[thm]{Proposition-Definition}
\newtheorem{lemma-definition}[thm]{Lemma-Definition}
\newtheorem{corollary}[thm]{Corollary}
\newtheorem{lemma}[thm]{Lemma}

\theoremstyle{definition}

\newtheorem{example}[thm]{Example}

\newtheorem{remark}[thm]{Remark}

\theoremstyle{remark}

\numberwithin{thm}{section}




\newcommand\ocM{\overline{\mathcal{M}}}


\newcommand\cC{\mathcal{C}}

\newcommand\cH{\mathcal{H}}

\newcommand\cL{\mathcal{L}}
\newcommand\cM{\mathcal{M}}

\newcommand\cO{\mathcal{O}}

\newcommand\cQ{\mathcal{Q}}

\newcommand\cS{\mathcal{S}}

\newcommand\cZ{\mathcal{Z}}

\newcommand\CC{\mathbb{C}}

\newcommand\bbP{\mathbb{P}}
\newcommand\PP{\mathbb{P}}



\newcommand\arr{\ifinner\to\else\longrightarrow\fi}

\def\displaytimes_#1{\mathrel{\mathop{\times}\limits_{#1}}}

\def\displayotimes_#1{\mathrel{\mathop{\bigotimes}\limits_{#1}}}


\newdir{ >}{{}*!/-5pt/@{>}}

\newcommand\doublelong[2]{\mathbin{\xymatrix{{}\ar@<3pt>[r]^{#1}
\ar@<-3pt>[r]_{#2}&}}}

\newlength{\ignora}


\newcommand{\ind}{\operatorname{Ind}}

\renewcommand{\setminus}{\smallsetminus}

\numberwithin{equation}{subsection}


\newcommand{\vmu}{\mu}

\newcommand{\BM}{\overline{\mathcal M}}

\newcommand{\tcM}{{\widetilde{\cM}}}

\newcommand{\ocS}{{\overline{\cS}}}

\newcommand{\ord}{\operatorname{ord}}

\newcommand{\Res}{\operatorname{Res}}

\newcommand{\hyp}{\operatorname{hyp}}

\newcommand{\odd}{\operatorname{odd}}

\newcommand{\even}{\operatorname{even}}

\newcommand{\nonhyp}{\operatorname{nonhyp}}


\begin{document}

\title[Principal boundary of differentials]{Principal boundary of moduli spaces of abelian and quadratic differentials}

\author{Dawei Chen}

\author{Qile Chen}

\thanks{D. Chen is partially supported by the NSF CAREER grant DMS-1350396 and Q. Chen is partially supported by the NSF grant DMS-1560830.}

\address[Chen]{Department of Mathematics\\
Boston College\\
Chestnut Hill, MA 02467\\
U.S.A.}
\email{dawei.chen@bc.edu}

\address[Chen]{Department of Mathematics\\
Boston College\\
Chestnut Hill, MA 02467\\
U.S.A.}
\email{qile.chen@bc.edu}

\date{\today}
\begin{abstract}
The seminal work of Eskin-Masur-Zorich described the principal boundary of moduli spaces of abelian differentials that parameterizes flat surfaces with a prescribed generic configuration of short parallel saddle connections. In this paper we describe the principal boundary for each configuration in terms of twisted differentials over Deligne-Mumford pointed stable curves. 
We also describe similarly the principal boundary of moduli spaces of quadratic differentials originally studied by Masur-Zorich. 
Our main technique is the flat geometric degeneration and smoothing developed by Bainbridge-Chen-Gendron-Grushevsky-M\"oller. 
\end{abstract}

\keywords{Abelian differential, principal boundary, moduli space of stable curves, spin and hyperelliptic structures}
\subjclass[2010]{14H10, 14H15, 30F30, 32G15}

\maketitle

\tableofcontents


\section{Introduction}
\label{sec:intro}

Many questions about Riemann surfaces are related to study their flat structures induced from abelian differentials, where the zeros of differentials correspond to the saddle points of flat surfaces. Loci of abelian differentials with prescribed type of zeros form a natural stratification of the moduli space of abelian differentials. These strata have fascinating geometry and can be applied to study dynamics on flat surfaces. 

Given a configuration of saddle connections for a stratum of flat surfaces, Veech and Eskin-Masur (\cite{Veech, EM}) showed that the number of collections of saddle connections with bounded lengths has quadratic asymptotic growth, whose leading coefficient is called the Siegel-Veech constant for this configuration. Eskin-Masur-Zorich (\cite{EMZ}) gave a complete description of all possible configurations of parallel saddle connections on a generic flat surface. They further provided a recursive method to calculate the corresponding Siegel-Veech constants. To perform this calculation, a key step is to describe the \emph{principal boundary} whose tubular neighborhood parameterizes flat surfaces with short parallel saddle connections for a given configuration. 

As remarked in \cite{EMZ}, flat surfaces contained in the Eskin-Masur-Zorich principal boundary can be disconnected and have total genus smaller than that of the original stratum. Therefore, as the underlying complex curves degenerate by shrinking the short saddle connections, the Eskin-Masur-Zorich principal boundary does not directly imply the limit objects from the viewpoint of algebraic geometry. In this paper we solve this problem by describing the principal boundary in the setting of the strata compactification \cite{BCGGM1} and consequently in the Deligne-Mumford compactification. 

\subsection*{Main Result} {\em For each configuration we give a complete description for the principal boundary in terms of twisted differentials over pointed stable curves.}  
\smallskip

This result is a combination of Theorems~\ref{thm:principal-I} and~\ref{thm:principal-II}. Along the way we deduce some interesting consequences about meromorphic differentials on $\bbP^1$ that admit the same configuration (see Propositions~\ref{prop:unique-moduli-typeI} and~\ref{prop:unique-moduli-typeII}). Moreover, when a stratum contains connected components due to spin or hyperelliptic structures (\cite{KZ}), Eskin-Masur-Zorich (\cite{EMZ}) described how to distinguish these structures nearby the principal boundary via an analytic approach. Here we provide algebraic proofs for the distinction of spin and hyperelliptic structures in the principal boundary under our setting (see Sections~\ref{subsec:parity-I} and~\ref{subsec:parity-II} for related results).

Masur-Zorich (\cite{MZ}) described similarly the principal boundary of strata of quadratic differentials. Our method can also give a description of the principal boundary in terms of twisted quadratic differentials in the sense of \cite{BCGGM3} (see Section~\ref{sec:quad} for details). 

Twisted differentials play an important role in our description of the principal boundary, so we briefly recall their definition (see \cite{BCGGM1} for more details). Given a zero type $\vmu = (m_1, \ldots, m_n)$, a \emph{twisted differential} $\eta$ of type $\vmu$ on an $n$-pointed stable curve $(C, \sigma_1, \ldots, \sigma_n)$ is a collection of (possibly meromorphic) differentials $\eta_i$ on each irreducible component $C_i$ of $C$, satisfying the following conditions: 
\begin{itemize}
\item[(0)] $\eta$ has no zeros or poles away from the nodes and markings of $C$ and $\eta$ has the prescribed zero order $m_i$ at each marking $\sigma_i$. 
\item[(1)] If a node $q$ joins two components $C_1$ and $C_2$, then $\ord_{q} \eta_1 + \ord_q \eta_2 = -2$. 
\item[(2)] If $\ord_{q} \eta_1 = \ord_q \eta_2 = -1$, then $\Res_q \eta_1 + \Res_q \eta_2 = 0$. 
\item[(3)] If $C_1$ and $C_2$ intersect at $k$ nodes $q_1, \ldots, q_k$, then $\ord_{q_i} \eta_1 - \ord_{q_i} \eta_2$ are either all positive, or all negative, or all equal to zero for $i = 1, \ldots, k$. 
\end{itemize}

Condition (3) provides a partial order between irreducible components that are not disjoint. If one expands it to a full order between all irreducible components of $C$, then there is an extra global residue condition which governs when such twisted differentials are limits of abelian differentials of type $\vmu$. A construction of the moduli space of twisted differentials can be found in \cite{BCGGM2}. 

By using $\eta$ on all maximum components and forgetting its scales on components of smaller order, \cite{BCGGM1} describes a strata compactification in the Hodge bundle over the Deligne-Mumford moduli space $\BM_{g,n}$. As remarked in \cite{BCGGM1}, if one forgets $\eta$ and only keeps track of the underlying pointed stable curve $(C, \sigma_1, \ldots, \sigma_n)$, it thus gives the (projectivized) strata compactification in $\BM_{g,n}$. Hence our description of the principal boundary in terms of twisted differentials determines the corresponding boundary in the Deligne-Mumford compactification. To illustrate our results, we will often draw such stable curves in the Deligne-Mumford boundary. 

For an introduction to flat surfaces and related topics, we refer to the surveys \cite{Zorich, Wright, Bootcamp}. Besides \cite{BCGGM1}, there are several other strata compactifications, see \cite{FP} for an algebraic viewpoint, \cite{Guere, CC} for a log geometric viewpoint and \cite{MW} for a flat geometric viewpoint. Algebraic distinctions of spin and hyperelliptic structures in the boundary of strata compactifications are also discussed in \cite{Gendron, ChenDiff, CC}. 

This paper is organized as follows. In Sections~\ref{sec:typeI} and~\ref{sec:typeII} we describe the principal boundary of type I and of type II, respectively, following the roadmap of \cite{EMZ}. In Section~\ref{sec:parity} we provide algebraic arguments for distinguishing spin and hyperelliptic structures in the principal boundary. Finally in Section~\ref{sec:quad} we explain how one can describe the principal boundary of strata of quadratic differentials by using twisted quadratic differentials. Throughout the paper we also provide a number of examples and figures to help the reader quickly grasp the main ideas. 

\subsection*{Notation} We denote by $\vmu$ the singularity type of differentials, by $\cH(\vmu)$ the stratum of abelian differentials of type $\vmu$ and by $\cQ(\vmu)$ the stratum of quadratic differentials of type $\vmu$. An $n$-pointed stable curve is generally denoted by $(C, \sigma_1, \ldots, \sigma_n)$. We use $(C, \eta)$ to denote a twisted differential on $C$. The underlying divisor of a differential $\eta$ is denoted by $(\eta)$. Configurations of saddle connections are denoted by $\cC$ and all configurations considered in this paper are admissible in the sense of \cite{EMZ}. 

\subsection*{Acknowledgements} We thank Matt Bainbridge, Alex Eskin, Quentin Gendron, Sam Grushevsky, Martin M\"oller, and Anton Zorich for inspiring discussions on related topics.

\section{Principal boundary of type I}
\label{sec:typeI}

\subsection{Configurations of type I: saddle connections joining distinct zeros}

Let $C$ be a flat surface in $\cH(\vmu)$ with two chosen zeros $\sigma_1$ and $\sigma_2$ of order $m_1$ and $m_2$, respectively. 
Suppose $C$ has precisely $p$ homologous saddle connections 
$\gamma_1, \ldots, \gamma_p$ joining $\sigma_1$ and $\sigma_2$ such that the following conditions hold:
\begin{itemize}
\item All saddle connections $\gamma_{i}$ are oriented from $\sigma_{1}$ to $\sigma_{2}$ with identical holonomy vectors.
\item The cyclic order of $\gamma_1, \ldots, \gamma_p$ at $\sigma_1$ is clockwise. 
\item The angle between $\gamma_i$ and $\gamma_{i+1}$ is $2\pi (a'_i+1)$ at $\sigma_1$ and $2\pi (a''_i+1)$ at $\sigma_2$, where $a'_i, a''_i \geq 0$. 
\end{itemize}
Then we say that $C$ has a {\em configuration of type} $\cC = (m_1, m_2, \{a'_i, a''_i\}_{i=1}^p)$. We emphasis here that this configuration $\cC$ is defined with the two chosen zeros $\sigma_{1}$ and $\sigma_{2}$. If $p = 1$, we also denote the configuration by $\cC = (m_1, m_2)$ for simplicity. 
Since the cone angle at $\sigma_i$ is $2\pi (m_i + 1)$ for $i=1,2$, we necessarily have 
\begin{eqnarray}\label{eqn:a-and-m}
\sum_{i=1}^p (a'_i+1) = m_1 + 1 \quad \text{and}\quad \sum_{i=1}^p (a''_i+1) = m_2 + 1. 
\end{eqnarray}

\subsection{Graphs of configurations}

Given two fixed zeros $\sigma_{1}$ and $\sigma_{2}$ and a configuration $\cC = (m_1, m_2, \{a'_i, a''_i\}_{i=1}^p)$ as in the previous section, to describe the dual graphs of the underlying nodal curves in the principal boundary of twisted differentials, we introduce the {\em configuration graph} $G(\cC)$ as follows: 
\begin{enumerate}
 \item The set of vertices is $\{v_{R}, v_{1}, \cdots, v_{p}\}$.  
 
 \item The set of edges is $\{l_{1}, \cdots, l_{p}\}$, where each $l_{i}$ joins $v_{i}$ and $v_{R}$.

 \item We associate to $v_R$ the subset of markings $L_R = \{\sigma_{1}, \sigma_{2}\}$ and to each $v_{i}$ a subset of markings $L_i \subset \{\sigma_{j}\}$ such that $L_{R} \sqcup L_1 \sqcup \cdots \sqcup L_p$
is a partition of $\{\sigma_1, \ldots, \sigma_n\}$. 

 \item We associate to each $v_{i}$ a positive integer $g(v_{i})$ such that
 $$\sum_{i=1}^{p} g(v_{i}) = 2g-2 \quad \text{and} \quad \sum_{\sigma_{j} \in L_i}\mu_j + (a'_i + a''_i + 1) = 2g(v_i) - 2.$$
\end{enumerate}

Figure~\ref{fig:graph-I} shows a pointed nodal curve whose dual graph is of type $G(\cC)$: 
\begin{figure}[h]
    \centering
    \psfrag{R}{$R$}
    \psfrag{A}{$C_1$}
    \psfrag{B}{$C_p$}
    \psfrag{a}{$\sigma_1$}
    \psfrag{b}{$\sigma_2$}
    \psfrag{L}{$L_1$}
    \psfrag{M}{$L_p$}   
    \includegraphics[scale=0.8]{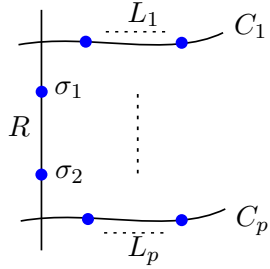}
    \caption{\label{fig:graph-I} A curve with dual graph of type $\cC$.}
    \end{figure}

\subsection{The principal boundary of type I}\label{ss:principal-boundaryI}

Denote by $\Delta(\vmu, \cC)$ the space of twisted differentials $\eta$ satisfying the following conditions: 

\begin{itemize}
\item The underlying dual graph of $\eta$ is given by $G(\cC)$, with nodes $q_{i}$ and components $C_{i}$ corresponding to $l_{i}$ and $v_{i}$, respectively.

\item The component $R$ corresponding to the vertex $v_{R}$ is isomorphic to $\bbP^1$ and contains only $\sigma_1$ and $\sigma_2$ among all the markings. 

\item Each $C_{i}$ has markings labeled by $L_{i}$ and has genus equal to $g(v_i)$. 

\item For each $i=1, \ldots, p$, $\ord_{q_i} \eta_{C_i} = a'_i + a''_i$ and $\ord_{q_i} \eta_{R}  = -  a'_i - a''_i - 2$. 

\item For each $i=1, \ldots, p$, $\Res_{q_i} \eta_R = 0$. 

\item $\eta_R$ admits the configuration $\cC$ of saddle connections from $\sigma_{1}$ to $\sigma_{2}$.
\end{itemize}

Recall that the twisted differential $\eta$ defines a flat structure on $R$ (up to scale). Thus it makes sense to talk about the configuration $\cC$ on $R$.
We say that $\Delta(\vmu,\cC)$ is the {\em principal boundary} associated to the configuration $\cC$. 

\smallskip

Suppose $C^{\varepsilon}\in \cH(\vmu)$ has the configuration $\cC = (m_1, m_2, \{a'_i, a''_i\}_{i=1}^p)$ such that the $p$ homologous saddle connections $\gamma_1, \ldots, \gamma_p$ of $C$ have length at most $\varepsilon $. We want to 
determine the limit twisted differential as the length of all $\gamma_i$ shrinks to zero. To avoid further degeneration, suppose that $C^{\varepsilon}$ does not have any other saddle connections shorter than $3\varepsilon$ (the locus of such $C^{\varepsilon}$ is called the {\em thick part} of the configuration $\cC$ in \cite{EMZ}). Take a small disk under the flat metric such that it contains 
$\sigma_1$, $\sigma_2$, all $\gamma_i$, and no other zeros (see \cite[Figure 5]{EMZ}). Within this disk, shrink $\gamma_i$ to zero while keeping the configuration $\cC$, such that all other periods become arbitrarily large compared to $\gamma_i$.  

\begin{thm}
\label{thm:principal-I}
The limit twisted differential of $C^{\varepsilon}$ as $\gamma_i \to 0$ is contained in $\Delta(\vmu, \cC)$. Conversely, twisted differentials in $\Delta(\vmu, \cC)$ can be smoothed to of type $C^{\varepsilon}$. 
\end{thm}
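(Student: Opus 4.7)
The plan is to prove both directions within the flat-geometric degeneration and smoothing framework of \cite{BCGGM1}. The forward direction extracts the limit twisted differential from a rescaled bubble region, while the reverse direction verifies the global residue condition (GRC) and applies the smoothing theorem.

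For the degeneration direction, the plan is to let $U_{\varepsilon} \subset C^{\varepsilon}$ be a small flat neighborhood of $\{\sigma_{1}, \sigma_{2}\} \cup \bigcup_{i} \gamma_{i}$ of diameter $O(\varepsilon)$ containing no other zeros. Since $\{\sigma_{1}, \sigma_{2}\} \cup \bigcup_{i} \gamma_{i}$ is a theta graph with two vertices and $p$ edges, an Euler-characteristic count (using that its $p$ complementary faces correspond to the pieces of $C^{\varepsilon} \setminus U_{\varepsilon}$) shows that $U_{\varepsilon}$ is a genus-zero surface with $p$ boundary components and $C^{\varepsilon} \setminus U_{\varepsilon} = \bigsqcup_{i=1}^{p} C_{i}^{\varepsilon}$. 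Rescaling $U_{\varepsilon}$ by $1/\varepsilon$ and passing to the limit should yield a meromorphic differential $\eta_{R}$ on $\bbP^{1}$ with zeros of orders $m_{1}$ and $m_{2}$ at $\sigma_{1}$ and $\sigma_{2}$ and $p$ poles $q_{i}$, where the angle data $2\pi(a'_{i}+1)$ and $2\pi(a''_{i}+1)$ between consecutive $\gamma_{i}$'s at the two zeros forces $\ord_{q_{i}} \eta_{R} = -a'_{i}-a''_{i}-2$ (one checks the degree identity $m_{1}+m_{2}-\sum(a'_{i}+a''_{i}+2)=-2$). The vanishing $\Res_{q_{i}} \eta_{R} = 0$ will follow from Stokes applied to $C_{i}^{\varepsilon}$: the $i$-th boundary component of $U_{\varepsilon}$ bounds $C_{i}^{\varepsilon}$, on which $\eta$ is holomorphic, giving $\int_{\partial C_{i}^{\varepsilon}} \eta = 0$, which passes to the rescaled limit. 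The pieces $C_{i}^{\varepsilon}$ converge to components $C_{i}$ meeting $R$ at $q_{i}$, and order matching yields $\ord_{q_{i}} \eta_{C_{i}} = a'_{i}+a''_{i}$. All defining conditions of $\Delta(\vmu, \cC)$ are thereby verified.

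For the smoothing direction, given $(C, \eta) \in \Delta(\vmu, \cC)$, the partial order on components of \cite{BCGGM1} places each $C_{i}$ strictly above $R$ since $\ord_{q_{i}} \eta_{C_{i}} = a'_{i}+a''_{i} \geq 0 > -a'_{i}-a''_{i}-2 = \ord_{q_{i}} \eta_{R}$. As each higher-level connected component is a single $C_{i}$ attached to $R$ at the one node $q_{i}$, the GRC reduces precisely to $\Res_{q_{i}} \eta_{R} = 0$, which holds by the definition of $\Delta(\vmu, \cC)$. The smoothing theorem of \cite{BCGGM1} then produces a family of abelian differentials in $\cH(\vmu)$ degenerating to $(C, \eta)$. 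Because the plumbing at each $q_{i}$ modifies only a small neighborhood of that node and leaves the flat structure on $R$ (including the saddle connections realizing $\cC$) intact at smaller scale, each smoothed surface $C^{\varepsilon}$ carries the desired configuration $\cC$.

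The main obstacle is making the rescaling in the first step rigorous and compatible with the twisted-differential formalism: one must show the normalized flat structures on $U_{\varepsilon}$ converge to an actual meromorphic differential on $\bbP^{1}$ with exactly the prescribed pole/zero data, and that the node identifications match the dual graph $G(\cC)$. Once this is arranged, the residue vanishing reduces to a single Stokes argument on each $C_{i}^{\varepsilon}$, and the converse direction is a direct appeal to the smoothing theorem of \cite{BCGGM1}.
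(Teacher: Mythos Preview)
Your proposal is correct and follows essentially the same route as the paper: decompose $C^{\varepsilon}$ into the central piece $U_{\varepsilon}$ and the $p$ complementary surfaces $C_i^{\varepsilon}$, rescale to obtain $(R,\eta_R)$ with the stated pole orders (the paper phrases this via the explicit half-disk/half-plane count of \cite{Boissy}), and invoke the smoothing theorem of \cite{BCGGM1} for the converse. The only cosmetic difference is that you deduce $\Res_{q_i}\eta_R=0$ directly by Stokes on $C_i^{\varepsilon}$, whereas the paper first recognizes the limit as a twisted differential and then reads off the vanishing from the global residue condition at the separating node $q_i$; these are the same argument.
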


\begin{proof}
Since $\gamma_i$ and $\gamma_{i+1}$ are homologous and next to each other, they bound a surface $C_i^{\varepsilon}$ with $\gamma_i$ and $\gamma_{i+1}$
as boundary, see \cite[Figure 5]{EMZ}. The inner angle between $\gamma_i$ and $\gamma_{i+1}$ at $\sigma_1$ is $2\pi (a'_i+1)$ and at $\sigma_2$ is $2\pi (a''_i+1)$. Shrinking the $\gamma_j$ to zero under the flat metric, the limit of $C_i^{\varepsilon}$ forms a flat surface $C_i$, and denote by $q_i$ the limit position of $\sigma_1$ and $\sigma_2$ in $C_i$. This shrinking operation is the inverse of breaking up a zero, see \cite[Figure 3]{EMZ}, which implies that 
the cone angle at $q_i$ is $2\pi (a'_i+a''_i+1)$, hence $C_i$ has a zero of order $a_i' + a_i''$ at $q_i$. 

On the other hand, instead of shrinking the $\gamma_j$, up to scale it amounts to expanding the other periods of $C_i^{\varepsilon}$ arbitrarily long compared to the $\gamma_j$. Since a small neighborhood $N_i$ enclosing both $\gamma_i$ and $\gamma_{i+1}$ in $C_i^{\varepsilon}$ consists of $2 (a'_i + a''_i +1)$ metric half-disks, under the expanding operation they turn into 
$2 (a'_i + a''_i +1)$ metric half-planes that form the basic domain decomposition for a pole of order $a'_i + a''_i + 2$ in the sense of \cite{Boissy}. Moreover, the boundary loop of $N_i$ corresponds to the vanishing cycle around $q_i$ in the shrinking operation, which implies that the resulting pole will be glued to $q_i$ as a node in the limit stable curve, hence we still use $q_i$ to denote the pole. See Figure~\ref{fig:pole-I} for the case $p=2$ and $m_1 = m_2 = 0$. 
\begin{figure}[h]
    \centering
    \psfrag{r}{$\gamma_1$}
    \psfrag{s}{$\gamma_2$}
    \psfrag{1}{$L_1^+$}
    \psfrag{2}{$R_1^+$}
    \psfrag{3}{$L_1^-$}
    \psfrag{4}{$R_1^-$}
    \psfrag{5}{$L_2^+$}
    \psfrag{6}{$R_2^+$}
    \psfrag{7}{$L_2^-$}
    \psfrag{8}{$R_2^-$}
    \includegraphics[scale=1.0]{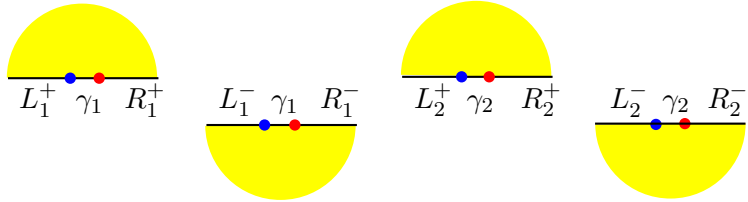}
    \caption{\label{fig:pole-I} The flat geometric neighborhood of $\gamma_1$ and $\gamma_2$ for the case $p=2$ and $m_1 = m_2 = 0$. Here we identify $L_1^- = L_2^+$, $L_1^+ = L_2^-$, $R_1^+ = R_2^-$, and $R_1^- = R_2^+$. As $\gamma_1, \gamma_2 \to 0$, the middle two half-disks form a neighborhood of an ordinary point and the remaining two half-disks form a neighborhood of another ordinary point. Alternatively as $L_i^{\pm}$ and $R_j^{\pm} \to \infty$, the middle two half-planes form a neighborhood of a double pole and the remaining two half-planes form a neighborhood of another double pole. Both poles have zero residue.}
    \end{figure}

Let $(R, \eta_R)$ be the limit meromorphic differential out of the expanding operation. We thus conclude that 
$$ (\eta_R) = m_1 \sigma_1 + m_2 \sigma_2 - \sum_{i=1}^p (a'_i + a''_i + 2) q_i. $$
By the relation~\eqref{eqn:a-and-m}, the genus of $R$ is zero, hence $R\cong \PP^1$. Since $q_i = C_i\cap R$ is a separating node, it follows from the global residue condition of \cite{BCGGM1} that $\Res_{q_i} \eta_R = 0$. Finally, in the expanding process the saddle connections $\gamma_i$ are all fixed, hence the configuration $\cC$ is preserved in the limit meromorphic differential $\eta_R$. Summarizing the above discussion, we see that 
the limit twisted differential is parameterized by $\Delta(\vmu, \cC)$. 

The other part of the claim follows from the flat geometric smoothing of \cite{BCGGM1}, as twisted differentials in $\Delta(\vmu, \cC)$ satisfy the global residue condition and have the desired configuration of saddle connections.  
\end{proof}

\begin{remark} For the purpose of calculating Siegel-Veech constants, the Eskin-Masur-Zorich principal boundary only takes into account the non-degenerate components $C_i$ and discards the degenerate rational component $R$, though it is quite visible --- for instance, $R$ can be seen as the central sphere in \cite[Figure 5]{EMZ}. 
\end{remark}

\subsection{Meromorphic differentials of type I on $\PP^1$}

Recall that for a twisted differential $\eta$ in $\Delta(\vmu, \cC)$, its restriction $\eta_R$ on the component $R\cong \PP^1$ has two zeros and $p$ poles, where the residue at each pole is zero. Up to scale, $\eta_R$ is uniquely determined by the zeros and poles. In this section we study the locus of $\PP^1$ marked at such zeros and poles. 

Given integers $m_1, m_2 \geq 1$ and $n_1, \ldots, n_p \geq 2$ with $m_1 + m_2 - \sum_{i=1}^p n_i = -2$, let $\cZ \subset \mathcal{M}_{0, p+2}$ be the locus of pointed rational curves $(\PP^1, \sigma_1, \sigma_2, q_1, \ldots, q_p)$ such that there exists a differential $\eta_0$ on $\PP^1$ satisfying that 
$$(\eta_0) = m_1 \sigma_1 + m_2 \sigma_2 - \sum_{i=1}^p n_i q_i \quad \text{and} \quad \Res_{q_i} \eta_0 = 0$$ 
for each $i= 1, \ldots, p$. 

For a given (admissible) configuration $\cC=  (m_1, m_2, \{a'_i, a''_i\}_{i=1}^p)$, consider the subset $\cZ(\cC) \subset \cZ$ parameterizing differentials $\eta_0$ on $\PP^1$ (up to scale) that admit a configuration of type $\cC$. 

\begin{proposition}
\label{prop:unique-moduli-typeI}
$\cZ(\cC)$ consists of a single point. 
\end{proposition}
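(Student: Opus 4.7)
The plan is to reconstruct the flat surface $(\PP^1,\eta_0)$ together with its marked points directly from the combinatorial data of $\cC$, and to verify that the reconstruction is canonical up to rescaling $\eta_0$. Since a point of $\cM_{0,p+2}$ is recovered from the underlying marked complex structure, and rescaling $\eta_0$ does not change the marked $\PP^1$, the uniqueness of $(\PP^1,\eta_0)$ modulo $\CC^\times$ will give the uniqueness of the point in $\cZ(\cC)$.

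First I would decompose. For any $(\PP^1,\eta_0)\in\cZ(\cC)$, cutting along the $p$ homologous saddle connections $\gamma_1,\ldots,\gamma_p$ subdivides the sphere into $p$ bigons $B_1,\ldots,B_p$, with $B_i$ bounded by $\gamma_i$ and $\gamma_{i+1}$ (indices cyclic). By the data of $\cC$, the bigon $B_i$ contains the unique pole $q_i$, its boundary consists of two saddle connections of common length $\ell=|\gamma_i|$, and the inner cone angles at $\sigma_1$ and $\sigma_2$ equal $2\pi(a'_i+1)$ and $2\pi(a''_i+1)$ respectively. Conversely, $(\PP^1,\eta_0)$ is recovered by gluing the $B_i$'s back along the $\gamma_i$'s in the cyclic order prescribed by $\cC$ around $\sigma_1$ and $\sigma_2$. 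Thus the proposition reduces to showing that each $B_i$ is uniquely determined by the pair $(a'_i,a''_i)$ together with the common length $\ell$.

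Next I would establish the rigidity of each bigon. Since $\ord_{q_i}\eta_0=-(a'_i+a''_i+2)$ and $\Res_{q_i}\eta_0=0$, the translation holonomy around $q_i$ is trivial and the period of $\eta_0$ around $q_i$ vanishes, so the primitive $F_i=\int\eta_0$ is single-valued on $B_i\setminus\{q_i\}$ and extends to a meromorphic map $F_i\colon B_i\to\PP^1$ with a single pole at $q_i$. Using the basic domain description of \cite{Boissy}, a neighborhood of a zero-residue pole of order $n_i=a'_i+a''_i+2$ is canonically a union of $n_i$ metric half-planes glued along horizontal rays in a prescribed cyclic pattern. The bigon $B_i$ is exactly the union of these half-planes, with $a'_i+1$ of them meeting at $\sigma_1$ and the remaining $a''_i+1$ meeting at $\sigma_2$; the two distinguished boundary segments of common length $\ell$ realize the saddle connections $\gamma_i$ and $\gamma_{i+1}$. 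The cone angle data together with the cyclic incidence around $q_i$ pins down the gluing combinatorics uniquely, so $B_i$ is determined up to rescaling by $\ell$.

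Finally I would globalize. Gluing $B_1,\ldots,B_p$ along $\gamma_1,\ldots,\gamma_p$ in the cyclic order inherited from $\cC$ and rescaling so that $\ell=1$ produces a unique flat sphere with the prescribed marked points $(\sigma_1,\sigma_2,q_1,\ldots,q_p)$, yielding $|\cZ(\cC)|\leq 1$. Non-emptiness follows from Theorem~\ref{thm:principal-I}: any admissible flat surface $C^\varepsilon\in\cH(\vmu)$ realizing $\cC$ degenerates in the limit to a twisted differential in $\Delta(\vmu,\cC)$ whose rational component provides an element of $\cZ(\cC)$. The step I expect to require the most care is the bigon rigidity: one must verify that after specifying the cone angles at the two corners, the common boundary length, and the zero-residue condition at the interior pole, no continuous modulus remains in the half-plane gluing. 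A direct period computation (as illustrated by Figure~\ref{fig:pole-I} in the case $p=2$, $m_1=m_2=0$) confirms this in small cases, and the general case reduces to the same cyclic bookkeeping around $q_i$.
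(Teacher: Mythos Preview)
Your proposal is correct and follows essentially the same strategy as the paper: both arguments reconstruct $(\PP^1,\eta_0)$ uniquely from the flat-geometric basic domain decomposition of \cite{Boissy}, and both pivot on the fact that the zero-residue condition forces all saddle connections to have identical holonomy. The only organizational difference is that the paper cuts along all horizontal rays through $\sigma_1,\sigma_2$ and works directly with the resulting half-planes, whereas you first cut along the $\gamma_i$ into bigons and then analyze each bigon via those same half-planes; your bigon rigidity step (which you flag as the delicate one) is exactly what the paper's final paragraph makes explicit, namely that the $a'_i+a''_i+1$ pairs of half-planes between $\gamma_i$ and $\gamma_{i+1}$ share a single point at infinity and so determine a single pole of order $a'_i+a''_i+2$. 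One small remark: the paper's construction yields existence directly, so your appeal to Theorem~\ref{thm:principal-I} for nonemptiness is unnecessary once the half-plane gluing is written out.
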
  

\begin{proof}
We provide a constructive proof using the flat geometry of meromorphic differentials. Let us make some observation first. Suppose $\eta_0$ is a differential on $\PP^1$ whose underlying divisor
corresponds to a point in $\cZ$. Since $\eta_0$ has zero residue at every pole, for any closed path $\gamma$ that does not contain a pole of $\eta_0$, the Residue Theorem says that 
$$ \int_{\gamma} \eta_0 = 0. $$
In particular, if $\alpha$ and $\beta$ are two saddle connections joining $\sigma_1$ to $\sigma_2$, then $\alpha - \beta$ represents a closed path on $\PP^1$, hence 
$$ \int_{\alpha} \eta_0 = \int_{\beta} \eta_0, $$
and $\alpha$ and $\beta$ necessarily have the same holonomy. It also implies that $\eta_0$ has no self saddle connections.

Now suppose $\eta_0$ admits a configuration of type $\cC$, i.e., up to scale it corresponds to a point in $\cZ(\cC)$. 
Recall that $\sigma_1$, $\sigma_2$, and $q_i$ are the zeros and poles of order $m_1$, $m_2$, and $a'_i + a''_i +2$, respectively, where $i= 1,\ldots, p$, and $\gamma_1, \ldots, \gamma_p$ are the saddle connections joining $\sigma_1$ to $\sigma_2$ such that the angle between $\gamma_i$ and $\gamma_{i+1}$ in the clockwise orientation at $\sigma_1$ is $2\pi (a'_i +1)$, and at $\sigma_2$ is $2\pi (a''_i + 1)$. By the preceding paragraph, there are no other saddle connections between $\sigma_1$ and $\sigma_2$. 

Rescale $\eta_0$ such that all the $\gamma_i$ have holonomy equal to $1$, that is, they are in horizontal, positive direction, and of length $1$. Cut the flat surface $\eta_0$ along all horizontal directions through $\sigma_1$ and $\sigma_2$, such that $\eta_0$ is decomposed into a union of half-planes as basic domains in the sense of \cite{Boissy}. These basic domains are of two types according to their boundary half-lines and saddle connections. The boundary of the basic domains of the first type contains exactly one of 
$\sigma_1$ and $\sigma_2$ that emanates two half-lines to infinity on both sides. The boundary of the basic domains of the second type, from left to right, consists of a half-line ending at $\sigma_1$, followed by a saddle connection $\gamma_i$, and then a half-line emanating for $\sigma_2$. 

Since the angle between $\gamma_i$ and $\gamma_{i+1}$ is given for each $i$, the configuration $\cC$ determines how these basic domains are glued together to form $\eta_0$. More precisely, start from an upper half-plane $S_1^{+}$ of the second type with two boundary half-lines $L_1^{+}$ to the left and $R_1^+$ to the right, joined by the saddle connection $\gamma_1$. Turn around 
$\sigma_1$ in the clockwise orientation. Then we will see a lower half-plane $S_1^{-}$ of the second type with two boundary half-lines $L_1^{-}$ and $R_1^{-}$ joined by $\gamma_1$. If $a'_1 = 0$, i.e., if the angle between $\gamma_1$ and $\gamma_2$ in the clockwise orientation is $2\pi$, then next we will see an upper half-plane $S_2^{+}$ of the second type with two boundary half-lines $L_2^{+}$ and $R_2^{+}$ joined by $\gamma_2$, which is glued to $S_1^{-}$ by identifying $L_2^{+}$ with $L_1^{-}$. See Figure~\ref{fig:pole-I} above for an illustration of this case. 

On the other hand if $a'_1 > 0$, we will see $a'_1$ pairs of upper and lower half-planes of the first type containing only $\sigma_1$ in their boundary, and then followed by the upper half-plane of the second type containing $\gamma_2$ in the boundary. Repeat this process for each pair $\gamma_i$ and $\gamma_{i+1}$ consecutively, and also use the angle between $\gamma_i$ and $\gamma_{i+1}$ at $\sigma_2$ to determine the identification of the $R_i^{\pm}$-edges emanated from $\sigma_2$. We conclude that the gluing pattern of these half-planes is uniquely determined by the configuration $\cC$. 

Finally, since the angle between $\gamma_i$ and $\gamma_{i+1}$ at $\sigma_1$ is $2\pi (a'_i + 1)$ and at $\sigma_2$ is $2\pi (a''_i + 1)$, it determines precisely $a'_i + a''_i + 1$ pairs of upper and lower half-planes that share the same point at infinity. In other words, they form a flat geometric neighborhood of a pole with order $a'_i + a''_i + 2$, which is the desired pole order of $q_i$ for $i=1,\ldots, p$. 
\end{proof}

\begin{corollary}
The cardinality of $\cZ$ is equal to the number of integral tuples 
$\{a'_i, a''_i \}_{i=1}^p$ where $a'_i, a''_i \geq 0$ and $a'_i + a''_i + 2 = n_i$ for each $i$. 
\end{corollary}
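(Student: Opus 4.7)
The plan is to establish a bijection between $\cZ$ and the set of admissible tuples $\{a'_i, a''_i\}_{i=1}^p$ with $a'_i, a''_i \geq 0$ and $a'_i + a''_i + 2 = n_i$ for each $i$, then invoke Proposition~\ref{prop:unique-moduli-typeI} to conclude.

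First I would build a map $\Phi$ from admissible tuples to $\cZ$: each tuple defines an admissible configuration $\cC = (m_1, m_2, \{a'_i, a''_i\}_{i=1}^p)$, and I would send it to the unique point of $\cZ(\cC) \subset \cZ$ provided by Proposition~\ref{prop:unique-moduli-typeI}.

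Next I would show $\Phi$ is a bijection. Injectivity is essentially automatic from the proof of Proposition~\ref{prop:unique-moduli-typeI}: on the associated flat surface $\eta_0$, the residue-vanishing argument forces every pair of saddle connections joining $\sigma_1$ to $\sigma_2$ to share the same holonomy vector, so the clockwise cyclic order around $\sigma_1$ together with the angles between consecutive such saddle connections at $\sigma_1$ and $\sigma_2$ are intrinsic to the marked curve. Hence different tuples yield non-isomorphic marked curves. For surjectivity, given any $(\PP^1, \sigma_1, \sigma_2, q_1, \ldots, q_p) \in \cZ$ with associated differential $\eta_0$, I would rescale $\eta_0$ so that its saddle connections from $\sigma_1$ to $\sigma_2$ are horizontal of unit length, apply the basic half-plane decomposition obtained by cutting along horizontal separatrices through $\sigma_1$ and $\sigma_2$ as in Proposition~\ref{prop:unique-moduli-typeI}, and read off the cyclic pattern and angles to produce an admissible tuple whose image under $\Phi$ is the given point.

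The main obstacle I anticipate is confirming in the surjectivity step that the number of horizontal saddle connections from $\sigma_1$ to $\sigma_2$ is exactly $p$, matching the number of poles in the marking. This should follow from a cone-angle bookkeeping: letting $H_{12}$ denote the number of basic half-planes of the second type (each containing one such saddle connection on its boundary) and $H_1, H_2$ the numbers of first-type half-planes concentrated at $\sigma_1$ and $\sigma_2$ respectively, the identities $2H_1 + H_{12} = 2(m_1 + 1)$, $2H_2 + H_{12} = 2(m_2 + 1)$ and $H_1 + H_2 + H_{12} = \sum_i n_i = m_1 + m_2 + 2$ force $H_{12} = 2p$, so exactly $p$ saddle connections arise and the resulting tuple is admissible with $a'_i + a''_i + 2 = n_i$, as required to match the count.
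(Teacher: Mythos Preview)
Your approach is exactly the paper's: identify admissible tuples with configurations and invoke Proposition~\ref{prop:unique-moduli-typeI}. The paper's proof is a single sentence and tacitly assumes that the $\cZ(\cC)$ partition $\cZ$; you correctly recognize that surjectivity (every point of $\cZ$ lies in some $\cZ(\cC)$) deserves justification and propose to extract the configuration from the half-plane decomposition.

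However, your bookkeeping at the end does not work as written. With your conventions, the three displayed relations
\[
2H_1 + H_{12} = 2(m_1+1),\qquad 2H_2 + H_{12} = 2(m_2+1),\qquad H_1 + H_2 + H_{12} = m_1 + m_2 + 2
\]
are not independent: adding the first two and dividing by $2$ already yields the third, so the system does not determine $H_{12}$ at all, let alone force $H_{12}=2p$. The error is in the total half-plane count. As recorded in the last paragraph of the proof of Proposition~\ref{prop:unique-moduli-typeI}, a pole of order $n_i$ is assembled from $n_i-1$ pairs of upper and lower half-planes, so the total number of half-planes is $\sum_i 2(n_i-1) = 2(m_1+m_2+2-p)$, not $\sum_i n_i$. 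With this correction (in your convention, $2H_1 + 2H_2 + H_{12} = 2(m_1+m_2+2-p)$), adding the first two relations and subtracting the corrected third does give $H_{12} = 2p$, and your surjectivity argument then goes through.
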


\begin{proof}
Such tuples have a one-to-one correspondence with all (admissible) configurations with the given zero and pole orders $m_1, m_2, n_1, \ldots, n_p$,
hence the claim follows from Proposition~\ref{prop:unique-moduli-typeI}. 
\end{proof}

\begin{example}
Consider the case $m_1 = 1$, $m_2 = 1$, $n_1 = 2$ and $n_2 = 2$. The only admissible configuration is 
$$ a'_1 = a''_1 = a'_2 = a''_2 = 0, $$
hence $\cZ$ consists of a single point. As a cross check, take $\sigma_1 = 1$, $q_1 = 0$, and $q_2 = \infty$ in $\PP^1$, and let $z$ be the affine coordinate. Then up to scale $\eta_0$ can be written as  
$$ \frac{(z-1)(z-\sigma_2)}{z^2} dz. $$
It is easy to see that $\Res_{\sigma_i} \eta_0 = 0$ if and only if $\sigma_2 = -1$.  
\end{example}

\begin{example}
Consider the case $m_1 = 1$, $m_2 = 3$ and $n_1 = n_2 = n_3 = 2$. There do not exist nonnegative integers 
$a'_1, a'_2, a'_3$ satisfying that 
$$ (a'_1 +1) + (a'_2 + 1) + (a'_3+1) = m_1 + 1 = 2, $$
because the left-hand side is at least $3$. Since there is no admissible configuration, we conclude that $\cZ$ is empty. As a cross check, 
let $q_1 = 0$, $q_2 = 1$, and $q_3 = \infty$. Up to scale $\eta_0$ can be written as
$$ \frac{(z-\sigma_1)(z-\sigma_2)^3}{z^2 (z-1)^2} dz. $$
One can directly verify that there are no $\sigma_1, \sigma_2 \in \PP^1\setminus \{0, 1, \infty \}$ such that 
$\Res_{\sigma_i} \eta_0 = 0$. 
\end{example}

\section{Principal boundary of type II}
\label{sec:typeII}

\subsection{Configurations of type II: saddle connections joining a zero to itself}

Let $C$ be a flat surface in $\cH(\vmu)$. Suppose $C$ has precisely $m$ homologous closed saddle connections 
$\gamma_1, \ldots, \gamma_m$, each joining a zero to itself. Let $L\subset \{ 1, \ldots, m \}$ be an index subset such that 
the curves $\gamma_l$ for $l\in L$ bound $q$ cylinders. After removing the cylinders along with all the $\gamma_k$, the remaining part in $C$ 
splits into $p = m - q$ disjoint surfaces $C_1, \ldots, C_p$, where the boundary of the closure $\overline{C}_k$ of each $C_k$ consists of two 
closed saddle connections $\alpha_k$ and $\beta_k$. These surfaces are glued together in a cyclic order to form $C$. More precisely, each $C_k$ 
 is connected to $C_{k+1}$ by either identifying $\alpha_k$ with $\beta_{k+1}$ (as some $\gamma_i$ in $C$) or inserting a metric cylinder with boundary $\alpha_k$ and $\beta_{k+1}$. The sum of genera of the $C_k$ is $g-1$, because the cyclic gluing procedure creates a central handle, hence it adds an extra one to the total genus 
 (see \cite[Figure 7]{EMZ}). 

There are two types of the surfaces $C_k$ according to their boundary components. If the boundary saddle connections $\alpha_i$ and $\beta_i$ of 
$\overline{C}_i$ are disjoint, we say that $C_i$ has \emph{a pair of holes} boundary. In this case $\alpha_i$ contains a single zero $z_i$ with cone angle $(2a_i + 3) \pi$ inside $C_i$, and $\beta_i$ contains a single zero $w_i$ with cone angle $(2b_i + 3) \pi$ inside $\overline{C}_i$, 
where $a_i, b_i \geq 0$. We also take into account the special case $m=1$, i.e., when we cut $C$ along $\gamma_1$, we get only one surface $C_1$ with two disjoint boundary components $\alpha_1$ and $\beta_1$. In this case $z_1$ is identified with $w_1$ in $C$, and we still say that $C_1$ has a pair of holes boundary. 

For the remaining case, if $\alpha_j$ and $\beta_j$ form a connected component for the boundary of $\overline{C}_j$, we say that $C_j$ has a \emph{figure eight} boundary. In this case 
$\alpha_j$ and $\beta_j$ contain the same zero $z_j$. Denote by $2(c'_j+1)\pi$ and $2(c''_j+1)\pi$ the two angles bounded by $\alpha_j$ and $\beta_j$ inside $C_j$, where $c'_j, c''_j \geq 0$, and let $c_j = c'_j + c''_j$.

In summary, the configuration considered above consists of the data 
$$ (L, \{ a_i, b_i\}, \{ c'_j, c''_j \}). $$

Conversely, given the surfaces $C_k$ along with some metric cylinders, local gluing patterns can create zeros of the following three types (see \cite[Figure 12]{EMZ} and 
\cite[Figures 6-8]{BG}): 
\begin{itemize}
\item[(i)] A cylinder, followed by $k\geq 1$ surfaces $C_1, \ldots, C_k$, each of genus $g_i \geq 1$ with a figure eight boundary, followed 
by a cylinder. The total angle at the newborn zero is 
$$ \pi + \sum_{i=1}^k (2c'_i + 2c''_i + 4)\pi + \pi, $$
hence its zero order is 
$$\sum_{i=1}^k (c_i+ 2).$$ 
 
\item[(ii)] A cylinder, followed by $k\geq 0$ surfaces $C_i$, each of genus $g_i \geq 1$ with a figure eight boundary, followed by a surface 
$C_{k+1}$ of genus $g_{k+1} \geq 1$ with a pair of holes boundary. The total angle at the newborn zero is 
$$ \pi + \sum_{i=1}^k (2c'_i + 2c''_i + 4)\pi +  (2b_{k+1} + 3) \pi, $$
hence its zero order is 
$$\sum_{i=1}^k (c_i  + 2) + (b_{k+1}+1).$$ 

\item[(iii)] A surface $C_0$ of genus $g_0 \geq 1$ with a pair of holes boundary, followed by $k\geq 0$ surfaces $C_i$, each of genus $g_i \geq 1$ with a figure eight boundary, followed by a surface $C_{k+1}$ of genus $g_{k+1} \geq 1$ with a pair of holes boundary. The total angle at the newborn zero is 
$$ (2a_0 + 3)\pi + \sum_{i=1}^k (2c'_i + 2c''_i + 4)\pi +  (2b_{k+1} + 3) \pi, $$
hence its zero order is 
$$\sum_{i=1}^k (c_i  + 2) + (a_0+1) + (b_{k+1}+1).$$ 
\end{itemize} 

For example, the flat surface in \cite[Figure 7]{EMZ} is constructed as follows: $S_1$ with a pair of holes boundary, followed by $S_2$ with a pair 
of holes boundary, then a cylinder, followed by $S_3$ with a figure eight boundary, then another cylinder, followed by $S_4$ with a figure eight boundary, and finally back to $S_1$. 

\subsection{The principal boundary of type II}

Suppose $C^{\varepsilon}\in \cH(\vmu)$ has the configuration $\cC = (L, \{ a_i, b_i\}, \{ c'_j, c''_j \})$ with the $m$ homologous saddle connections 
$\gamma_1, \ldots, \gamma_m$ of length at most $\varepsilon$. Moreover, suppose that $C^{\varepsilon}$ does not have any other saddle connections shorter than $3\varepsilon$. As before, we degenerate $C^{\varepsilon}$ by shrinking $\gamma_i$ to zero while keeping the configuration, such that the ratio of any other period to $\gamma_i$ becomes arbitrarily large. Let $\Delta(\vmu, \cC)$ be the space of twisted differentials that arise as limits of such a degeneration process. Recall the three types 
of gluing patterns and newborn zeros in the preceding section. We will analyze the types of their degeneration as building blocks to describe twisted differentials in $\Delta(\vmu, \cC)$. 

For the convenience of describing the degeneration, we view a cylinder as a union of two half-cylinders by truncating it in the middle. Then as its height tends to be arbitrarily large compared to the width, each half-cylinder becomes a half-infinite cylinder, which represents a flat geometric neighborhood of a simple pole. Moreover, the two newborn simple poles have opposite residues, because the two half-infinite cylinders have the same width with opposite orientations.   

\begin{proposition} 
\label{prop:II-(i)}
Consider a block of surfaces of type (i) in $C^{\varepsilon}$, that is, a half-cylinder, followed by $k\geq 1$ surfaces $C_1^{\varepsilon}, \ldots, C_k^{\varepsilon}$, each of genus $g_i \geq 1$ with a figure eight boundary, followed by a half-cylinder. Let $\sigma$ be the newborn zero of 
order $\sum_{i=1}^k (c_i+ 2)$. As $\varepsilon \to 0$, we have 
\begin{itemize}
\item The limit differential consists of $k$ disjoint surfaces $C_1, \ldots, C_k$ attached to a component $R\cong \PP^1$ 
at the nodes $q_1, \ldots, q_k$, respectively. 
\item $R$ contains only $\sigma$ among all the markings. 
\item For each $i = 1, \ldots, k$, $\ord_{q_i} \eta_{C_i} = c_i$ and $\ord_{q_i}\eta_R = - c_i - 2$. 
\item For each $i = 1, \ldots, k$, $\Res_{q_i}\eta_R = 0$. 
\item $\eta_R$ has two simple poles at $q_0$ and $q_{k+1} \in R\setminus \{\sigma, q_1, \ldots, q_k \}$ with opposite residues $\pm r$. 
\item $\eta_R$ admits a configuration of type (i), i.e., it has precisely $k+1$ homologous self saddle connections with angles $2(c'_i+1)\pi$ and $2(c''_i+1)\pi$ in between consecutively for $i=1, \ldots, k$, and with holonomy equal to $r$ up to sign. 
\end{itemize}
\end{proposition}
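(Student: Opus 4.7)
The plan is to mirror the two-viewpoint argument of Theorem~\ref{thm:principal-I}: I would simultaneously shrink the homologous saddle connections $\gamma_{l}$ inside the block (to identify the limits of the non-degenerate surfaces $C_{i}$) and dually expand the other periods relative to the $\gamma_{l}$ (to recover the limit meromorphic differential $\eta_{R}$ on the central rational component $R$).

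For the shrinking direction, each figure-eight surface $C_{i}^{\varepsilon}$ converges to a closed surface $C_{i}$ with its figure-eight boundary collapsed to a point $q_{i}$. The interior angles at the figure-eight vertex $z_{i}$ are $2\pi(c'_{i}+1)$ and $2\pi(c''_{i}+1)$, so by the inverse of the break-a-zero operation applied as in the proof of Theorem~\ref{thm:principal-I}, the cone angle at $q_{i}$ in the limit equals $2\pi(c_{i}+1)$ and hence $\ord_{q_{i}}\eta_{C_{i}} = c_{i}$.

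For the expanding direction, each figure-eight collapse produces a pole of order $c_{i}+2$ at $q_{i}$: a small neighborhood of $\alpha_{i}\cup\beta_{i}$ in $C_{i}^{\varepsilon}$ decomposes into $2(c_{i}+1)$ flat half-planes, which is exactly the basic-domain count of \cite{Boissy} for such a pole with vanishing residue. As explained in the paragraph preceding the proposition, each of the two end half-cylinders, as the ratio of the other periods to the common width $r$ tends to infinity, turns into a half-infinite flat cylinder, i.e.\ the basic domain of a simple pole at $q_{0}$ or $q_{k+1}$; because these halves come from a single cylinder whose two sides carry opposite orientations, their residues at $q_{0}$ and $q_{k+1}$ are $\pm r$. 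A direct degree count now gives
\[
\deg(\eta_{R}) = \sum_{i=1}^{k}(c_{i}+2) - \sum_{i=1}^{k}(c_{i}+2) - 1 - 1 = -2,
\]
so $R\cong\PP^{1}$, and by construction $\sigma$ is the only marking on $R$. The vanishing of $\Res_{q_{i}}\eta_{R}$ for $i=1,\ldots,k$ then follows from the global residue condition of \cite{BCGGM1}, since each such $q_{i}$ is a separating node in the limit dual graph; the residues at $q_{0}$ and $q_{k+1}$ are left unconstrained by that condition because these nodes connect the block to its neighbors and are non-separating in the global dual graph.

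The step I expect to be the main obstacle is verifying the final bullet, namely that $\eta_{R}$ realizes precisely the configuration $\cC$ of type~(i) with the prescribed angular data. To handle this, I plan to track the gluing of the half-plane basic domains at $\sigma$ on $R$ in a manner analogous to the argument of Proposition~\ref{prop:unique-moduli-typeI}: the $k+1$ saddle connections of the block, which retain their common holonomy $r$ throughout the deformation, determine the $2(k+1)$ ends emanating from $\sigma$ on $R$, and the angular parameters $c'_{i}+1$ and $c''_{i}+1$ coming from each figure eight in $C_{i}^{\varepsilon}$ translate directly into the angles $2\pi(c'_{i}+1)$ and $2\pi(c''_{i}+1)$ separating consecutive saddle connections on $R$ in the expected cyclic order.
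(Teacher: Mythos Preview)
Your proposal is correct and follows essentially the same shrink/expand two-viewpoint argument as the paper. Two minor points: the shrinking operation here is the inverse of the \emph{figure eight construction} (see \cite[Figure~10]{EMZ}), not the break-a-zero operation from the type~I proof; and for the final bullet the paper argues more directly that the $k+1$ saddle connections and the angles between them are simply preserved under the expanding process, so $\eta_{R}$ inherits the configuration automatically --- the basic-domain tracking you propose (in the style of Proposition~\ref{prop:unique-moduli-typeI}) works but is more than is needed.
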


See Figure~\ref{fig:II-(i)} for an illustration of the underlying curve of the limit differential. 

\begin{figure}[h]
    \centering
    \psfrag{R}{$R$}
    \psfrag{A}{$C_1$}
    \psfrag{B}{$C_k$}
    \psfrag{s}{$\sigma$}
    \psfrag{0}{$q_0$}
    \psfrag{1}{$q_1$}
    \psfrag{k}{$q_k$}
    \psfrag{l}{$q_{k+1}$}  
    \includegraphics[scale=0.8]{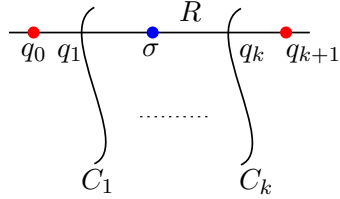}
    \caption{\label{fig:II-(i)} The underlying curve of the limit differential in Proposition~\ref{prop:II-(i)}.}
    \end{figure}

\begin{proof}
As $\varepsilon \to 0$, the limit of each $C_i^{\varepsilon}$ is a flat surface $C_i$, where the figure eight boundary of $C_i^{\varepsilon}$ shrinks 
to a single zero $q_i$ with cone angle $( 2 c_i + 2)\pi$, i.e., $q_i$ is a zero of order $c_i$. This shrinking operation is the inverse of 
the figure eight construction, see \cite[Figure 10]{EMZ}. On the other hand, instead of shrinking the boundary saddle connections 
$\alpha_i, \beta_i$ of the $C_i^{\varepsilon}$, up to scale it amounts to expanding the other periods of the $C_i^{\varepsilon}$ arbitrarily 
long compared to the $\alpha_i, \beta_i$. Since a small neighborhood $N_i$ enclosing both $\alpha_i$ and $\beta_i$ in $C_i^{\varepsilon}$ 
consists of $2(c'_i + c''_i + 1)$ metric half-disks, under the expanding operation they turn into $2(c'_i + c''_i + 1) = 2(c_i+1)$ metric half-planes that form the basic domain decomposition for a pole of order $c_i+2$ in the sense of \cite{Boissy}. The boundary loop of $N_i$ corresponds 
to the vanishing cycle around $q_i$ in the shrinking operation, which implies that the resulting pole will be glued to $q_i$ as a node in the limit. In 
addition, the two half-cylinders expand to two half-infinite cylinders, which create two simple poles $q_0$ and $q_{k+1}$ with opposite residues $\pm r$, 
where $r$ encodes the width of the cylinders.  

Let $(R, \eta_R)$ be the limit meromorphic differential out of the expanding operation. We thus conclude that 
$$ (\eta_R) = \left(\sum_{i=1}^k (c_i+ 2)\right)\sigma - \sum_{i=1}^k (c_i+2) q_i - q_{0} - q_{k+1}, $$
and hence the genus of $R$ is zero. Since $q_i = C_i \cap R$ is a separating node, it follows from the global residue condition 
of \cite{BCGGM1} that $\Res_{q_i} \eta_R = 0$. As a cross check, 
$$ \sum_{i=0}^{k+1} \Res_{q_i} \eta_R =  \Res_{q_0} \eta_R + 0 + \cdots + 0 + \Res_{q_{k+1}} \eta_R = 0, $$
hence $\eta_R$ satisfies the Residue Theorem on $R$. Finally, the cylinders are glued to the figure eight boundary on both sides, hence 
the $k+1$ homologous self saddle connections have holonomy equal to $r$ up to sign. Their configuration (holonomy and angles in between) is preserved in the expanding process, hence the limit differential $\eta_R$ possesses the desired configuration. 
\end{proof}

\begin{proposition} 
\label{prop:II-(ii)}
Consider a block of surfaces of type (ii) in $C^{\varepsilon}$, that is, a half-cylinder, followed by $k\geq 0$ surfaces $C_1^{\varepsilon}, \ldots, C_k^{\varepsilon}$, each of genus $g_i \geq 1$ with a figure eight boundary, followed by a surface $C_{k+1}^{\varepsilon}$ of genus $g_{k+1}\geq 1$ 
with a pair of holes boundary.  Let $\sigma$ be the newborn zero of order $\sum_{i=1}^k (c_i+ 2) + (b_{k+1}+1)$. As $\varepsilon \to 0$, we have 
\begin{itemize}
\item The limit differential consists of $k+1$ disjoint surfaces $C_1, \ldots, C_{k+1}$ attached to a component $R\cong \PP^1$ 
at the nodes $q_1, \ldots, q_{k+1}$, respectively. 
\item $R$ contains only $\sigma$ among all the markings. 
\item For each $i = 1, \ldots, k$, $\ord_{q_i} \eta_{C_i} = c_i$ and $\ord_{q_i}\eta_R = - c_i - 2$. 
\item $\ord_{q_{k+1}} \eta_{C_{k+1}} = b_{k+1}$ and $\ord_{q_{k+1}}\eta_R = - b_{k+1} - 2$.  
\item For each $i = 1, \ldots, k$, $\Res_{q_i}\eta_R = 0$. 
\item $\eta_R$ has a simple pole at $q_0  \in R\setminus \{\sigma, q_1, \ldots, q_{k+1} \}$ with 
$\Res_{q_0} \eta_R = - \Res_{q_{k+1}} \eta_R = \pm r$. 
\item $\eta_R$ admits a configuration of type (ii), i.e., it has precisely $k+1$ homologous self saddle connections with angles $2(c'_i+1)\pi$ and $2(c''_i+1)\pi$ in between consecutively for $i=1, \ldots, k$, and with holonomy equal to $r$ up to sign. 
\end{itemize}
\end{proposition}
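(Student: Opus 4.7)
The plan is to mirror the proof of Proposition~\ref{prop:II-(i)}, adapting the dictionary between vanishing saddle connections and the basic domain decomposition of the limit meromorphic differential on $R$. The two new features are: (a) only one of the two ends of the block is a half-cylinder (the other being the pair-of-holes surface $C_{k+1}^{\varepsilon}$), so only one half-infinite cylinder appears after expansion, and (b) the $\beta_{k+1}$-side of $C_{k+1}^{\varepsilon}$ contributes a pole of order $b_{k+1}+2$ on $R$ whose residue is forced to be nonzero by the Residue Theorem rather than killed by the global residue condition.

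First I would carry out the shrinking step. For each $i = 1, \ldots, k$, the figure eight boundary of $C_i^{\varepsilon}$ collapses to a single point $q_i$ with cone angle $(2c_i+2)\pi$, exactly as in the proof of Proposition~\ref{prop:II-(i)}, so $(C_i, \eta_{C_i})$ has a zero of order $c_i$ at $q_i$. For $C_{k+1}^{\varepsilon}$, only the $\beta_{k+1}$ component of its pair-of-holes boundary lies in the vanishing family of this block; shrinking it collapses $w_{k+1}$ to a point $q_{k+1}$ which acquires cone angle $(2b_{k+1}+2)\pi$, hence $(C_{k+1}, \eta_{C_{k+1}})$ has a zero of order $b_{k+1}$ at $q_{k+1}$ (the other boundary $\alpha_{k+1}$ remains of finite size and is unaffected here).

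Next I would carry out the dual expansion step to produce $R$. The $2(c_i+1)$ metric half-disks around each figure eight expand to $2(c_i+1)$ half-planes, gluing to a pole of order $c_i+2$ at $q_i$ on $R$ with vanishing residue by the basic domain calculus of \cite{Boissy}. The $2(b_{k+1}+1)+1$ half-disks around $\beta_{k+1}$ (counting the half-disk contributed by the incoming cylinder) expand to the basic domain decomposition of a pole of order $b_{k+1}+2$ at $q_{k+1}$; the attached half-cylinder at the opposite end expands to a half-infinite cylinder, producing a simple pole at $q_0$ with residue $\pm r$ determined by the circumference. The divisor identity
$$
(\eta_R) = \Big(\sum_{i=1}^k(c_i+2)+(b_{k+1}+1)\Big)\sigma - \sum_{i=1}^k (c_i+2)q_i - (b_{k+1}+2)q_{k+1} - q_0
$$
has degree $-2$, so $R \cong \PP^1$. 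The nodes $q_1, \ldots, q_k$ are separating (each $C_i$ meets $R$ only at $q_i$), so the global residue condition of \cite{BCGGM1} forces $\Res_{q_i}\eta_R = 0$ for $i=1,\ldots,k$; applying the Residue Theorem on $R$ then gives $\Res_{q_{k+1}}\eta_R = -\Res_{q_0}\eta_R = \mp r$, matching the width of the adjoining cylinder in $C^{\varepsilon}$.

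Finally, as in Proposition~\ref{prop:II-(i)}, the homology classes and holonomies of the $k+1$ vanishing saddle connections $\gamma_1,\ldots,\gamma_{k+1}$, together with the angles $2(c'_i+1)\pi$ and $2(c''_i+1)\pi$ between consecutive ones, are preserved under the expanding operation because it is a rescaling outside a shrinking neighborhood of the $\gamma_j$. Hence $\eta_R$ inherits precisely the configuration of type (ii). The main obstacle I anticipate is making the residue bookkeeping at $q_{k+1}$ unambiguous: one needs to argue that the sign of $\Res_{q_{k+1}}\eta_R$ is exactly opposite to that of $\Res_{q_0}\eta_R$ (rather than just having equal absolute value), which I would settle by orienting the single waist curve of the cylinder consistently and tracking the induced orientations of the two half-infinite cylinders produced by truncation.
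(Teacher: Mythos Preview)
Your approach is essentially identical to the paper's: shrink to obtain the $C_i$ with the stated zero orders, expand to build $\eta_R$ from half-planes plus one half-infinite cylinder, read off the divisor to see $R\cong\PP^1$, and then deduce $\Res_{q_{k+1}}\eta_R=-\Res_{q_0}\eta_R$ from the Residue Theorem on $R$ (the paper gives exactly this as its alternative argument, after first arguing directly via the opposite orientations of the boundary loops $N_0$ and $N_{k+1}$, which is your proposed fix for the sign as well).

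One small correction: the neighborhood $N_{k+1}$ of $\beta_{k+1}$ in $C_{k+1}^{\varepsilon}$ consists of $2(b_{k+1}+1)$ half-disks, not $2(b_{k+1}+1)+1$; one of these is \emph{irregular} in the sense of \cite[Figure~8]{EMZ}, and it is this irregular half-disk that encodes the nonzero residue at $q_{k+1}$. There is no ``incoming cylinder'' on the $\beta_{k+1}$ side --- the unique half-cylinder sits at the opposite end of the block and produces only $q_0$. This does not affect your conclusions (you still get the correct pole order $b_{k+1}+2$), but the parenthetical should be dropped.
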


See Figure~\ref{fig:II-(ii)} for an illustration of the underlying curve of the limit differential. 

\begin{figure}[h]
    \centering
    \psfrag{R}{$R$}
    \psfrag{A}{$C_1$}
    \psfrag{B}{$C_k$}
    \psfrag{C}{$C_{k+1}$}
    \psfrag{s}{$\sigma$}
    \psfrag{0}{$q_0$}
    \psfrag{1}{$q_1$}
    \psfrag{k}{$q_k$}
    \psfrag{l}{$q_{k+1}$}  
    \includegraphics[scale=0.8]{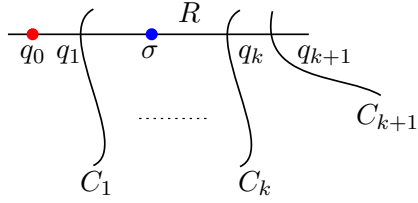}
    \caption{\label{fig:II-(ii)} The underlying curve of the limit differential in Proposition~\ref{prop:II-(ii)}.}
    \end{figure}

\begin{proof}
The proof is almost identical with the preceding one. The only difference occurs at the last surface. A small neighborhood $N_{k+1}$ enclosing 
$\beta_{k+1}$ in $C_{k+1}^{\varepsilon}$ consists of $2(b_{k+1} + 1)$ half-disks, one of which is irregular as in \cite[Figure 8]{EMZ}, 
hence in the expanding process they turn into 
$2(b_{k+1} + 1)$ half-planes, giving a flat geometric neighborhood for a pole of order $b_{k+1} + 2$. Moreover, $N_{k+1}$ is homologous to the $\gamma_i$. The orientation of $N_{k+1}$ is the opposite to that of $N_0$ enclosing the boundary $\alpha_0$ 
of the beginning half cylinder, hence their homology classes add up to zero. We thus conclude that $\Res_{q_0} \eta_R = - \Res_{q_{k+1}} \eta_R$. Alternatively, it follows from the Residue Theorem applied to $R$, since $\Res_{q_i}\eta_R = 0$ for all $i = 1, \ldots, k$. The holonomy of the saddle connections and the angles between them are preserved in the expanding process, hence $\eta_R$ has the configuration as described. 
\end{proof}

\begin{proposition} 
\label{prop:II-(iii)}
Consider a block of surfaces of type (iii) in $C^{\varepsilon}$, that is, a surface $C_{0}^{\varepsilon}$ of genus $g_{k+1}\geq 1$ 
with a pair of holes boundary, followed by $k\geq 0$ surfaces $C_1^{\varepsilon}, \ldots, C_k^{\varepsilon}$, each of genus $g_i \geq 1$ with a figure eight boundary, followed by a surface $C_{k+1}^{\varepsilon}$ of genus $g_{k+1}\geq 1$ 
with a pair of holes boundary.  Let $\sigma$ be the newborn zero of order $\sum_{i=1}^k (c_i+ 2) +  (a_0+1) + (b_{k+1}+1)$. As $\varepsilon \to 0$, we have 
\begin{itemize}
\item The limit differential consists of $k+2$ disjoint surfaces $C_0, \ldots, C_{k+1}$ attached to a component $R\cong \PP^1$ 
at the nodes $q_0, \ldots, q_{k+1}$, respectively. 
\item $R$ contains only $\sigma$ among all the markings. 
\item For each $i = 1, \ldots, k$, $\ord_{q_i} \eta_{C_i} = c_i$ and $\ord_{q_i}\eta_R = - c_i - 2$. 
\item $\ord_{q_{0}} \eta_{C_{0}} = a_0$ and $\ord_{q_{0}}\eta_R = - a_{0} - 2$.  
\item $\ord_{q_{k+1}} \eta_{C_{k+1}} = b_{k+1}$ and $\ord_{q_{k+1}}\eta_R = - b_{k+1} - 2$.  
\item For each $i = 1, \ldots, k$, $\Res_{q_i}\eta_R = 0$. 
\item $\Res_{q_0} \eta_R = - \Res_{q_{k+1}} \eta_R = \pm r$. 
\item $\eta_R$ admits a configuration of type (iii), i.e., it has precisely $k+1$ homologous self saddle connections with angles $2(c'_i+1)\pi$ and $2(c''_i+1)\pi$ in between consecutively for $i=1, \ldots, k$, and with holonomy equal to $r$ up to sign.   
\end{itemize}
\end{proposition}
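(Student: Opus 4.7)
The argument proceeds exactly along the lines of Propositions~\ref{prop:II-(i)} and~\ref{prop:II-(ii)}, with the key new feature being that \emph{both} end pieces of the block are pair-of-holes surfaces rather than half-cylinders. In particular, no half-infinite cylinders emerge in the limit, so neither $q_0$ nor $q_{k+1}$ becomes a simple pole; instead, each contributes a pole whose order is dictated by the corresponding pair-of-holes boundary.

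First I would run the \emph{shrinking} half of the degeneration. Each interior surface $C_i^{\varepsilon}$ with a figure eight boundary collapses, via the inverse of the figure eight construction (\cite[Figure 10]{EMZ}), to a flat surface $C_i$ with a single zero $q_i$ of order $c_i = c'_i + c''_i$. Each end surface $C_0^{\varepsilon}$ and $C_{k+1}^{\varepsilon}$ collapses, via the inverse of breaking up a zero, to flat surfaces $C_0$ and $C_{k+1}$ with new zeros $q_0$ and $q_{k+1}$ of orders $a_0$ and $b_{k+1}$ respectively. This already gives the claimed values of $\ord_{q_i}\eta_{C_i}$ for all $i = 0, \ldots, k+1$.

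Next I would dualize to the \emph{expanding} picture to identify $(R,\eta_R)$. For each $i = 1, \ldots, k$, a small neighborhood of $\alpha_i \cup \beta_i$ consists of $2(c_i + 1)$ metric half-disks which, after rescaling, become $2(c_i+1)$ half-planes forming a basic domain decomposition of a pole of order $c_i + 2$ in the sense of \cite{Boissy}, glued to $C_i$ at $q_i$. At the two ends, a neighborhood of $\alpha_0$ in $C_0^{\varepsilon}$ (respectively of $\beta_{k+1}$ in $C_{k+1}^{\varepsilon}$) consists of $2(a_0 + 1)$ (respectively $2(b_{k+1}+1)$) half-disks, one of which is irregular as in \cite[Figure~8]{EMZ}, so these expand to give poles at $q_0$ and $q_{k+1}$ of orders $a_0 + 2$ and $b_{k+1} + 2$. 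A quick degree count
\[
\deg(\eta_R) = \sum_{i=1}^k (c_i+2) + (a_0 + 1) + (b_{k+1} + 1) - \sum_{i=1}^k (c_i + 2) - (a_0 + 2) - (b_{k+1} + 2) = -2
\]
confirms that $R$ has genus zero, hence $R \cong \PP^1$, and that $\sigma$ is the unique marking on $R$.

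Finally I would pin down the residues and the configuration. Each node $q_i$ with $i = 1, \ldots, k$ is separating, so the global residue condition of \cite{BCGGM1} forces $\Res_{q_i}\eta_R = 0$. Applied to $R$, the Residue Theorem then yields $\Res_{q_0}\eta_R + \Res_{q_{k+1}}\eta_R = 0$, so we may write both residues as $\pm r$; equivalently, one can argue directly that the homology classes of the loops $N_0$ and $N_{k+1}$ enclosing the two end boundaries carry opposite orientations. Because the lengths of the saddle connections $\gamma_1, \ldots, \gamma_{k+1}$ and the angles between consecutive pairs are preserved under the expansion, $\eta_R$ inherits the configuration of type (iii) with holonomy $r$ (up to sign), and the period $r$ coincides with the common width encoded by the two end residues. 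The main delicate point is the consistent bookkeeping of orientations at the two pair-of-holes ends, which is where the Residue Theorem shortcut is most useful, just as in the proof of Proposition~\ref{prop:II-(ii)}.
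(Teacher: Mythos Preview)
Your proposal is correct and follows the same route as the paper, which simply observes that since both ends now carry a pair-of-holes boundary, the argument is obtained by combining the analyses already carried out in Propositions~\ref{prop:II-(i)} and~\ref{prop:II-(ii)}. You have just made explicit the steps the paper leaves to the reader; the only minor slip is calling the pair-of-holes collapse the ``inverse of breaking up a zero'' (that phrase belongs to the type~I picture), but the intended geometric content---that the boundary shrinks to a zero of order $a_0$ (resp.\ $b_{k+1}$)---is correct.
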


See Figure~\ref{fig:II-(iii)} for an illustration of the underlying curve of the limit differential. 

\begin{figure}[h]
    \centering
    \psfrag{R}{$R$}
    \psfrag{A}{$C_1$}
    \psfrag{B}{$C_k$}
    \psfrag{C}{$C_{k+1}$}
    \psfrag{D}{$C_{0}$}
    \psfrag{s}{$\sigma$}
    \psfrag{0}{$q_0$}
    \psfrag{1}{$q_1$}
    \psfrag{k}{$q_k$}
    \psfrag{l}{$q_{k+1}$}  
    \includegraphics[scale=0.8]{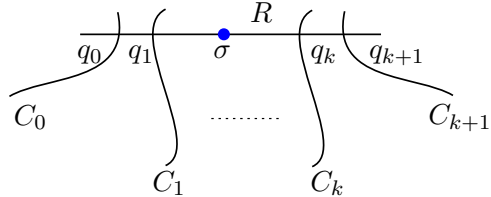}
    \caption{\label{fig:II-(iii)} The underlying curve of the limit differential in Proposition~\ref{prop:II-(iii)}.}
    \end{figure}

\begin{proof}
Since the beginning and ending surfaces both have a pair of holes boundary, the proof follows from the previous two. 
\end{proof}

Let us call the limit twisted differentials in Propositions~\ref{prop:II-(i)}, ~\ref{prop:II-(ii)}, and~\ref{prop:II-(iii)} \emph{surfaces of degenerate type} (i), (ii), and (iii), respectively. In order to glue them to form a global twisted differential, the above proofs (and also the definition of twisted differentials) imply the following gluing pattern. The simple pole $q_0$ (or $q_{k+1}$) in a surface of degenerate type (i) has to be glued with a simple pole in another surface of type (i) or (ii), and the same description holds for $q_0$ in a surface of type (ii). For a surface of type (ii), the component $C_{k+1}$ has to be contained in another surface of type (ii) or (iii). Namely, it has a zero of order $b'_{k+1}$ that is glued with a pole $q'_{k+1}$ of order $b'_{k+1}+2$ in the rational component $R'$  of the other surface. The same description holds for $C_0$ and $C_{k+1}$ in a surface of type (iii). 

\begin{thm}
\label{thm:principal-II}
In the above setting, $\Delta(\vmu, \cC)$ parameterizes twisted differentials constructed by gluing surfaces of degenerate type 
(i), (ii), and (iii).  
\end{thm}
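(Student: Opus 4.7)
The plan is to mirror the strategy of Theorem~\ref{thm:principal-I}, treating the three degenerate types from Propositions~\ref{prop:II-(i)}--\ref{prop:II-(iii)} as local building blocks and assembling them along the cylinders. In the degeneration direction, given $C^\varepsilon\in\cH(\vmu)$ with configuration $\cC$, I would first use the cyclic decomposition of $C^\varepsilon$ into subsurfaces $C_k$ and metric cylinders already supplied by $\cC$, then group these pieces into maximal blocks whose boundaries are either half-cylinders or pair-of-holes surfaces. Each such block falls into exactly one of the patterns (i), (ii), (iii), and applying the corresponding proposition produces a local limit carrying a rational component $R$ attached to the non-degenerate $C_i$'s at zero-pole nodes, with two, one, or zero simple poles at the ends coming from half-cylinders.

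Next, I would show that these local limits glue consistently into a global twisted differential on a nodal curve whose dual graph encodes $\cC$. Two adjacent blocks share either a full metric cylinder, which in the limit produces a node joining two simple poles whose residues $\pm r$ match by Proposition~\ref{prop:II-(i)} or \ref{prop:II-(ii)}, or else a pair-of-holes surface $C_{k+1}$, which belongs simultaneously to the two neighboring blocks of type (ii) or (iii) and is attached to both rational components at nodes of the prescribed orders $b_{k+1}$ and $a_{k+1}$. These matchings are exactly the compatibility rules stated immediately before the theorem, and they are forced by the pole/zero order balance and by the residue pairing at each cylinder node. The surviving $C_i$'s become the higher-genus vertices, while each rational $R$ carries the local meromorphic differential $\eta_R$ determined by Propositions~\ref{prop:II-(i)}--\ref{prop:II-(iii)}.

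For the converse, I would start from any twisted differential assembled according to these gluing rules and verify the global residue condition of \cite{BCGGM1}, after which the flat-geometric smoothing of \cite{BCGGM1} supplies nearby abelian differentials in $\cH(\vmu)$. Within each rational component the residues at the separating zero-pole nodes vanish by the three propositions, while the simple pole residues cancel pairwise along the cylinder nodes; consequently, for any full order extending the partial order of condition~(3) in the definition of twisted differentials, the sum of residues at poles of any given level is zero. The smoothing then produces $m$ parallel closed saddle connections with exactly the prescribed angles, holonomies, and incidence pattern, so the configuration $\cC$ is realized on $C^\varepsilon$.

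The main obstacle I anticipate is the combinatorial bookkeeping needed to verify the residue condition around the cycle. Because the blocks are arranged cyclically rather than linearly, one must track the $\pm r$ residues at every cylinder node simultaneously and confirm that the cancellations close up globally; some care is also needed where two blocks of type (ii) or (iii) share a pair-of-holes surface $C_{k+1}$, since $C_{k+1}$ then sits at an intermediate level of the twisted-differential order, and the residue of $\eta_R$ at the corresponding node must vanish on the rational component, as already guaranteed by the propositions. Once this local-to-global compatibility is checked, the rest of the argument follows from the same flat-geometric shrinking and expanding operations used in the proof of Theorem~\ref{thm:principal-I}.
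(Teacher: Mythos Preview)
Your degeneration argument is essentially the paper's own: decompose $C^\varepsilon$ cyclically into blocks of types (i), (ii), (iii) and apply Propositions~\ref{prop:II-(i)}--\ref{prop:II-(iii)} blockwise. The paper's proof is in fact nothing more than this one sentence, because $\Delta(\vmu,\cC)$ is \emph{defined} as the space of limits, so only the inclusion ``every limit is a gluing'' needs to be argued; the gluing rules stated just before the theorem are extracted from the propositions, not imposed and then verified. Your converse via the global residue condition and the smoothing of \cite{BCGGM1} is therefore extra relative to what the paper actually proves here.

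That converse sketch, however, contains a genuine error. A pair-of-holes component $C_{k+1}$ does \emph{not} sit at an intermediate level: $\eta_{C_{k+1}}$ is holomorphic with a zero of order $b_{k+1}$ (resp.\ $a_0$) at the node, so $C_{k+1}$ lies at the top level together with all the other non-rational pieces, while every rational $R$ is strictly below it. More importantly, Propositions~\ref{prop:II-(ii)} and~\ref{prop:II-(iii)} explicitly state $\Res_{q_{k+1}}\eta_R=\mp r\neq 0$ (and likewise $\Res_{q_0}\eta_R=\pm r$), so the residue at that node does \emph{not} vanish as you claim. The global residue condition for such a $C_{k+1}$ is instead satisfied because $C_{k+1}$ meets two rational components $R$ and $R'$ at one node each, and the residues there are $+r$ and $-r$ respectively (coming from the opposite orientations of the two boundary circles $\alpha_{k+1}$, $\beta_{k+1}$; this is the alternating-sign observation the paper records in the Remark following the theorem). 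So the residue bookkeeping you flagged as the main obstacle is indeed the delicate point, but the cancellation mechanism is pairing the two nonzero residues across the shared $C_{k+1}$, not vanishing of each one separately.
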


\begin{proof}
Since $C^{\varepsilon}$ admits the configuration $\cC = (L, \{ a_i, b_i\}, \{ c'_j, c''_j \})$, it can be constructed by gluing blocks of surfaces 
of type (i), (ii), and (iii). By applying Propositions~\ref{prop:II-(i)}, ~\ref{prop:II-(ii)}, and~\ref{prop:II-(iii)} simultaneously, we thus conclude that 
the limit twisted differential is formed by gluing surfaces of degenerate type (i), (ii), and (iii) as above.  
\end{proof}

We summarize some useful observation out of the proofs. 

\begin{remark}
If the homologous closed saddle connections in a configuration $\cC$ of type II contains $k$ distinct zeros, then a curve in $\Delta(\vmu, \cC)$ contains 
$k$ rational components. Moreover, if two rational components intersect, then each of them has a simple pole at the node, and the residues at the two branches of the node add up to zero. In general, at the polar nodes the residues are $\pm r$ for a fixed nonzero $r\in \CC$, such that their signs are alternating along 
the (unique) circle in the dual graph of the entire curve, and that the holonomy of the saddle connections is equal to $r$ up to sign. 
\end{remark}

\begin{example}
The limit of the surface in \cite[Figure 7]{EMZ} as the $\gamma_i$ shrink to zero is of the following type: $S_1$, followed by a marked $\PP^1$, followed 
by $S_2$, followed by a marked $\PP^1$, followed by a marked $\PP^1$ with an $S_3$ tail, followed by a marked $\PP^1$ with an $S_4$ tail, and back to $S_1$, see Figure~\ref{fig:EMZ-1}, where $R_1$ is of type (iii), $R_2$ is of type (ii), $R_3$ is of type (i) and $R_4$ is of type (ii). 

\begin{figure}[h]
    \centering
    \psfrag{1}{$S_1$}
    \psfrag{2}{$R_1$}
    \psfrag{3}{$S_2$}
    \psfrag{4}{$R_2$}
    \psfrag{5}{$R_3$}
    \psfrag{6}{$S_3$}
    \psfrag{7}{$S_4$}
    \psfrag{8}{$R_4$}
    \includegraphics[scale=0.8]{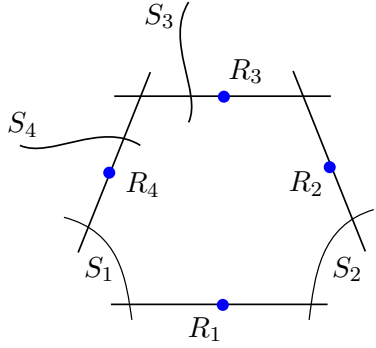}
    \caption{\label{fig:EMZ-1} The underlying curve of the degeneration of \cite[Figure 7]{EMZ}.}
    \end{figure}
\end{example}

\begin{example}
The limit of the surface in \cite[Figure 11]{EMZ} as the $\gamma_i$ shrink to zero is of the following type: a flat torus $E_1$, followed by a chain of two 
$\PP^1$, each with a marked simple zero, followed by a flat torus $E_2$, followed by a chain of two $\PP^1$, each with a marked simple zero, and back to $E_1$, see Figure~\ref{fig:EMZ-2}. Moreover, the differential on each $\PP^1$ has a double pole at the intersection with one of the tori and has a simple pole at the intersection with one of the $\PP^1$. Finally, the residues at the two poles of each $\PP^1$ are $\pm r$ for some fixed nonzero $r\in \CC$, such that 
their signs are alternating along the cyclic dual graph of the entire curve. 

\begin{figure}[h]
    \centering
    \psfrag{1}{$E_1$}
    \psfrag{2}{$E_2$}
      \includegraphics[scale=0.8]{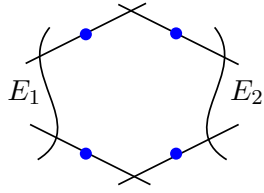}
    \caption{\label{fig:EMZ-2} The underlying curve of the degeneration of \cite[Figure 11]{EMZ}.}
    \end{figure}
\end{example}

\subsection{Meromorphic differentials of type II on $\PP^1$}

Recall in Proposition~\ref{prop:unique-moduli-typeI} we showed that differentials on $\PP^1$ admitting a given configuration of type I are unique up to scale. The same result holds for differentials on $\PP^1$ admitting a given configuration of type (i), (ii), or (iii) as above. 

\begin{proposition}
\label{prop:unique-moduli-typeII}
Let $\eta_0$ be a differential on $\PP^1$ that admits a configuration of type either (i), (ii), or (iii) as described in Propositions~\ref{prop:II-(i)}, ~\ref{prop:II-(ii)}, and~\ref{prop:II-(iii)}. Then up to scale such $\eta_0$ is unique. 
\end{proposition}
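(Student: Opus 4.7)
The strategy is parallel to the proof of Proposition~\ref{prop:unique-moduli-typeI}: rescale $\eta_0$, cut the flat surface into basic domains along horizontal geodesic rays through the unique zero $\sigma$, and show that the configuration $\cC$ forces a unique gluing pattern up to an overall complex scalar.

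First, the holonomies of the $k+1$ homologous self saddle connections $\gamma_1, \ldots, \gamma_{k+1}$ must all coincide. Indeed, for each $1 \leq i \leq k$ the closed cycle $\gamma_i - \gamma_{i+1}$ bounds a region of $\PP^1$ whose only interior pole is $q_i$, which satisfies $\Res_{q_i}\eta_0 = 0$ by Propositions~\ref{prop:II-(i)}--\ref{prop:II-(iii)}. The Residue Theorem then yields $\int_{\gamma_i}\eta_0 = \int_{\gamma_{i+1}}\eta_0$. Rescale $\eta_0$ by a nonzero complex number so that each $\gamma_i$ becomes horizontal of length $|r|$, and the prescribed residues take the values $\pm r$ on the real axis.

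Next, cut $\eta_0$ along all horizontal geodesic rays emanating from $\sigma$. The resulting basic flat pieces are of three kinds: upper or lower half-planes whose boundary meets $\sigma$ only (the analogue of the first type in the proof of Proposition~\ref{prop:unique-moduli-typeI}); upper or lower half-planes whose boundary contains an entire saddle connection $\gamma_i$ (the analogue of the second type); and a half-infinite horizontal cylinder of circumference $|r|$ attached at each nonzero-residue pole, which in the case of a simple pole exhausts the whole flat neighborhood, and in the case of a higher-order pole is adjacent to a standard decomposition of half-planes in the sense of \cite{Boissy}. The prescribed angles $2(c'_i+1)\pi$ and $2(c''_i+1)\pi$ at $\sigma$ then force the exact number of first-type half-planes inserted between $\gamma_i$ and $\gamma_{i+1}$ on each side, exactly as in the argument for Proposition~\ref{prop:unique-moduli-typeI}. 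The two half-cylinders (or the cylindrical ends of higher-order residue poles for types (ii) and (iii)) can only attach at the two extreme positions of the cyclic sequence around $\sigma$ adjacent to $\gamma_1$ and $\gamma_{k+1}$, because every interior pole $q_j$ with $1 \leq j \leq k$ has zero residue and therefore admits no cylindrical end. Matching widths $|r|$ on both ends, together with the opposite residue signs enforced by the Residue Theorem on $R$, make the gluing consistent and entirely determined by $\cC$; hence $\eta_0$ is unique up to the initial overall rescaling.

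The main obstacle will be the clean flat-geometric handling of the higher-order poles with nonzero residue that appear in types (ii) and (iii): one must verify that a pole of order $n \geq 2$ with residue $\pm r$, whose local decomposition consists of a half-cylinder of circumference $|r|$ glued to the appropriate collection of half-planes, can be inserted into the cyclic sequence around $\sigma$ in only one way compatible with the prescribed angles. Once this local rigidity is established, the combinatorial gluing argument from the proof of Proposition~\ref{prop:unique-moduli-typeI} carries over without essential change.
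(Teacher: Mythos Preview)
Your approach is essentially the same as the paper's: both rescale so the saddle connections are horizontal of unit length, cut along horizontal directions through $\sigma$, and use the prescribed angles $2(c'_i+1)\pi$, $2(c''_i+1)\pi$ to force a unique gluing of half-planes and half-infinite cylinders. The paper carries this out in detail only for type (i) and asserts the remaining types follow similarly, just as you do.

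One point where the paper's argument is sharper than yours concerns the first step. Your claim that $\gamma_i - \gamma_{i+1}$ bounds a region containing only the pole $q_i$ is not part of the configuration hypothesis; which pole sits in which complementary region is exactly part of what uniqueness is meant to establish. The paper avoids this by a cleaner residue argument: treat any self saddle connection $\alpha$ as a closed loop in $\PP^1 \setminus \{q_{k+1}\}$ and observe that its winding number about $q_0$ must be nonzero, since all other enclosed poles have zero residue and a vanishing period would contradict $\alpha$ having positive length. Hence every self saddle connection has holonomy $\pm r$, so the given $\gamma_1, \ldots, \gamma_{k+1}$ are \emph{all} the saddle connections of $\eta_0$. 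This stronger conclusion is what guarantees that the horizontal cutting produces exactly the expected list of basic domains, and it is worth making explicit in your argument.
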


\begin{proof}
We provide a constructive proof for the case of type (i), which is analogous to the proof of 
Proposition~\ref{prop:unique-moduli-typeI}. The other two types follow similarly. 

Let us make some observation first. Suppose $\eta_0$ is a differential on $\PP^1$ with a unique zero $\sigma$ and $k+2$ poles $q_0, \ldots, q_{k+1}$ such that $\Res_{q_i} \eta_0 = 0$ 
for $i = 1, \ldots, k$, and that $\Res_{q_0} \eta_0 = - \Res_{q_{k+1}} = \pm r$ for a nonzero $r$. Let $\alpha$ and $\beta$ be two self saddle connections of $\eta_0$. 
Treat them as closed loops in $\CC = \PP^1 \setminus \{q_{k+1} \}$. Then the indices of $\alpha$ and $\beta$ to $q_0$ cannot be zero, for otherwise the integral of $\eta_0$ along them would be zero, contradicting that they are saddle connections of positive length. Therefore, both of them enclose $q_0$ in $\CC$, hence by the Residue 
Theorem 
$$ \int_{\alpha} \eta_0 = \int_{\beta} \eta_0 = \pm r. $$ 
We conclude that in this case all saddle connections of $\eta_0$ are homologous with holonomy equal to $\pm r$. 

Now suppose $\eta_0$ admits the configuration of type (i) (as the description for $\eta_R$ in Proposition~\ref{prop:II-(i)}). 
Rescale $\eta_0$ such that the holonomy of the saddle connections $\gamma_1, \ldots, \gamma_{k+1}$ is $1$. By the preceding paragraph, $\eta_0$ has no other saddle connections. Cut the flat surface $\eta_0$ along all horizontal directions through the unique zero $\sigma$. Since $\eta_0$ has two simple poles with opposite residues equal to $\pm 1$, we see two half-infinite cylinders with boundary given by the first and the last saddle connections $\gamma_1$ and $\gamma_{k+1}$, respectively. The rest part of $\eta_0$ splits into half-planes as basic domains in the sense of \cite{Boissy}, which are of two types according to their boundary. The boundary of the half-planes of the first type contains $\sigma$ that emanates two half-lines to infinity on both sides. The boundary of the half-planes of the second type, from left to right, consists of a half-line ending at $\sigma$, followed by a saddle connection $\gamma_i$, and then a half-line emanated from $\sigma$.  

Since the angles between $\gamma_i$ and $\gamma_{i+1}$ are given on both sides inside the open surface (after removing the two half-infinite cylinders), this configuration determines how these half-planes are glued together. More precisely, say in the counterclockwise direction the angle between $\gamma_i$ and $\gamma_{i+1}$ is $2\pi (c'_i+1)$. Then starting from the upper half-plane $S_i^{+}$ of the second type containing $\gamma_i$ in the boundary and turning counterclockwise, we will see $c'_i$ pairs of lower and upper half-planes of the first type, and then the lower half-plane $S_{i+1}^{-}$ of the second type containing 
$\gamma_{i+1}$ in the boundary. Repeat this process for each $i$ on both sides. We conclude that the gluing pattern of these half-planes 
is uniquely determined by the configuration. After gluing, the resulting open surface has a single figure eight boundary formed by $\gamma_1$ and $\gamma_{k+1}$ at the beginning and at the end, which is then identified with the boundary of the two half-infinite cylinders to recover $\eta_0$.   
Finally, since the angles between $\gamma_i$ and $\gamma_{i+1}$ are $2\pi (c'_i+1)$ and $2\pi (c''_i+1)$ on both sides, 
it determines precisely $c'_i + c''_i + 1 = c_i + 1$ paris of upper and lower half-planes that share the same point at infinity. In other words, they give rise to a flat geometric representation of a pole of order $c_i + 2$, which is the desired pole order for $i = 1, \ldots, k$. 
\end{proof}

\section{Spin and hyperelliptic structures}
\label{sec:parity}

For special $\vmu$, the stratum $\cH(\vmu)$ can be disconnected. Kontsevich and Zorich (\cite{KZ}) classified 
connected components of $\cH(\vmu)$ for all $\vmu$. Their result says that $\cH(\vmu)$ can have up to three connected components, where the extra components are caused by spin and  hyperelliptic structures. 

\subsection{Spin structures}

We first recall the definition of spin structures. Suppose $\vmu = (2k_1, \ldots, 2k_n)$ is a partition of $2g-2$ with even entries only. For an abelian differential $(C, \omega) \in \cH(\vmu)$, let 
$$ (\omega) = 2k_1 \sigma_1 + \cdots + 2k_n \sigma_n $$
be the associated canonical divisor. Then the line bundle 
$$ \cL = \cO (k_1\sigma_1 + \cdots + k_n \sigma_n) $$ 
is a square root of the canonical line bundle, hence $\cL$ gives rise to a spin structure (also called a theta characteristic). Denote by 
$$ h^0(C, \cL) \pmod{2} $$
the parity of $\omega$. By Atiyah (\cite{Atiyah}) and Mumford (\cite{Mumford}), parities of theta characteristics are deformation invariant. We also refer to $\omega$ along with its parity as a spin structure, which can be either even or odd, and denote the parity by 
$\phi(\omega)$. 

Alternatively, there is a topological description for spin structures using the Arf invariant, due to Johnson ([J]). For a smooth simple closed curve $\alpha$ on a flat surface, let $\ind (\alpha)$ be the degree of the Gauss map from $\alpha$ to the unit circle. Namely, $2\pi \cdot \ind(\alpha)$ is the total change of the angle of the unit tangent vector to $\alpha$ under the flat metric as it moves along $\alpha$ one time. 

Let $\{a_i, b_i \}_{i=1}^g$ 
be a symplectic basis of $C$, i.e., $a_i \cdot a_j = b_i \cdot b_j = 0$ and $a_i \cdot b_j = \delta_{ij}$ for $1\leq i, j \leq g$. 
When $\omega$ has only even zeros, the parity $\phi(\omega)$ can be equivalently defined as 
$$ \phi(\omega) = \sum_{i=1}^g (\ind (a_i) + 1) (\ind (b_i) + 1) \pmod{2}. $$
Suppose we change the choice of the bases, say, by letting $a_i$ cross a zero $\sigma_j$ from one side to the other. Since the zero order of $\sigma_j$ is even, $\ind (a_i)$ remains unchanged mod $2$, hence $\phi(\omega)$ is independent of the choice of the symplectic bases. 

\subsection{Hyperelliptic structures}

Next we recall the definition of hyperelliptic structures. There are two cases: $\vmu = (2g-2)$ and $\vmu = (g-1, g-1)$. For 
$(C, \omega)\in \cH(2g-2)$, if $C$ is hyperelliptic and $\tau^{-1}\omega = - \omega$, where $\tau$ is the hyperelliptic involution of $C$, then we say that $(C, \omega)$ has a hyperelliptic structure. Equivalently in this case, the unique zero $\sigma$ of $\omega$ is a Weierstrass point, i.e., $\sigma$ is a ramification point of the hyperelliptic double cover $C\to \PP^1$. 

For $(C, \omega) \in \cH(g-1, g-1)$, similarly if $C$ is hyperelliptic and $\tau^{-1}\omega = - \omega$, then we say that $(C, \omega)$ has a hyperelliptic structure. Equivalently in this case, the two zeros $\sigma_1$ and $\sigma_2$ of $\omega$ are hyperelliptic conjugates of each other, i.e., $\sigma_1$ and $\sigma_2$ have the same image under the hyperelliptic double cover. 

From the viewpoint of flat geometry, we remark that $-\omega$ means rotating the flat surface corresponding to $\omega$ by $180$ degree. Moreover, a hyperelliptic structure in general requires more than that the underlying curve $C$ is hyperelliptic. 

\subsection{Connected components of  $\cH(\vmu)$}

Now we can state precisely the classification of connected components of $\cH(\vmu)$ in \cite{KZ}: 
\begin{itemize}
\item Suppose $g\geq 4$. Then 
\subitem $\cH(2g-2)$ has three connected components: the hyperelliptic component $\cH^{\hyp}(2g-2)$, the 
odd spin component $\cH^{\odd}(2g-2)$, and the even spin component $\cH^{\even}(2g-2)$. 
\subitem $\cH(g-1, g-1)$, when $g$ is odd, has three connected components: the hyperelliptic component $\cH^{\hyp}(g-1,g-1)$, the 
odd spin component $\cH^{\odd}(g-1,g-1)$, and the even spin component $\cH^{\even}(g-1,g-1)$. 
\subitem $\cH(g-1, g-1)$, when $g$ is even, has two connected components:  the hyperelliptic component $\cH^{\hyp}(g-1,g-1)$ and the nonhyperelliptic component $\cH^{\nonhyp}(g-1, g-1)$. 
\subitem All the other strata of the form $\cH(2k_1, \ldots, 2k_n)$ have two connected components: the 
odd spin component $\cH^{\odd}(2k_1, \ldots, 2k_n)$ and the even spin component $\cH^{\even}(2k_1, \ldots, 2k_n)$.
\subitem All the remaining strata are connected. 

\item Suppose $g=3$. Then 
\subitem $\cH(4)$ has two connected components: the hyperelliptic component $\cH^{\hyp}(4)$ and the 
odd spin component $\cH^{\odd}(4)$, where the even spin component coincides with the hyperelliptic component.
\subitem  $\cH(2,2)$ has two connected components: the hyperelliptic component $\cH^{\hyp}(2,2)$ and the 
odd spin component $\cH^{\odd}(2,2)$, where the even spin component coincides with the hyperelliptic component.
\subitem All the other strata are connected. 

\item Suppose $g=2$. Then both $\cH(2)$ and $\cH(1,1)$ are connected. Each of them coincides with its hyperelliptic component. 
\end{itemize}

\subsection{Degeneration of spin structures}

Let $\cS_g$ be the moduli space of spin structures on smooth genus $g$ curves. The natural morphism $\cS_g \to \cM_g$ is an unramified cover of degree $2^{2g}$. Moreover, $\cS_g$ is a disjoint union of $\cS_g^{+}$ and $\cS_g^{-}$, parameterizing even and odd spin structures, respectively. Cornalba (\cite{Cornalba}) constructed a compactified moduli space of spin structures $\ocS_g = \ocS_g^{+} \sqcup \ocS_g^{-}$ over $\ocM_g$, whose boundary parameterizes degenerate spin structures on stable nodal curves and distinguishes their parities. 
 
We first recall spin structures on nodal curves of compact type. Suppose a nodal curve $C$ consists of $k$ irreducible components $C_1, \ldots, C_k$ such that each of the nodes is separating, i.e., removing it disconnects $C$. Let $L_i$ be a theta characteristic 
on $C_i$, i.e., $L_i^{\otimes 2} = K_{C_i}$. At each node of $C$, insert a $\bbP^1$-bridge, called an exceptional component, and take the line bundle 
$\cO(1)$ on it. Then the collection $\{ (C_i, L_i) \}_{i=1}^k$ along with $\cO(1)$ on each exceptional component 
gives a spin structure on $C$, whose parity is determined by 
$$ h^0(C_1, L_1) + \cdots + h^0(C_k, L_k) \pmod{2}. $$
In particular, if $C_i$ has genus $g_i$, then $g_1 + \cdots + g_k = g$. On each $C_i$ there are $2^{2g_i}$ distinct theta characteristics, hence in total they glue to $2^{2g}$ spin structures on $C$, which equals the number of theta characteristics on a smooth curve of genus $g$. 

If $C$ is not of compact type, the situation is more complicated, because there are two types of spin structures. For example, consider the case 
when $C$ is an irreducible one-nodal curve, by identifying two points $q_1$ and $q_2$ in its normalization $C'$ as a node $q$. For the first type, one can take a square root $L$ of the dualizing line bundle $\omega_C$, which gives $2^{2g-1}$ such spin structures. Equivalently, pull back $L$ 
to $L'$ on $C'$. Then $L'$ is a square root of $K_{C'}(q_1 + q_2)$, and there are $2^{2g-2}$ such $L'$ on $C'$. By Riemann-Roch, 
$h^0(C', L') - h^0(C', L'(-q_1 - q_2)) = 1$, hence neither $q_1$ nor $q_2$ is a base point of $L'$, and any section $s$ of $L'$ that vanishes 
at one of the $q_i$ must also vanish at the other. Therefore, the space of sections $H^0(C', L')$ has a decomposition 
$V_0 \oplus \langle s \rangle$, where $V_0$ 
is the subspace of sections that vanish at $q_1$ and $q_2$, and $s$ is a section not vanishing at the $q_i$. Note that $L^{\otimes 2} = \omega_C$, whose fibers over $q_1$ and $q_2$ have a canonical identification by $\Res_{q_1} \omega + \Res_{q_2} \omega = 0$, where $\omega$ is a stable differential with at worst simple poles at the $q_i$, treated as a local section of $\omega_C$ at $q$. It implies that in order to glue the fibers of $L'$ over $q_1$ and $q_2$ to form $L$ on $C$, there are two choices, and exactly one of the two preserves $s$ as a section of $L$. We thus conclude that this way gives $2^{2g-1}$ spin structures on $C$, where half of them are even and the other half are odd. For the second type, insert an exceptional 
$\bbP^1$-component connecting $q_1$ and $q_2$ in $C'$. Take an ordinary theta characteristic $L'$ on $C'$ and the bundle $\cO(1)$ on $\bbP^1$. 
In this way one obtains $2^{2g-2}$ such $L'$. For a fixed $L'$, there is no extra choice of gluing $L'$ to $\cO(1)$ at $q_1$ and $q_2$, due to the automorphisms of $\cO(1)$ on $\bbP^1$, and hence the parity of the resulting spin structure equals that of $\eta'$. Nevertheless, the morphism $\ocS_g \to \ocM_g$ is simply ramified along the locus of such $\eta'$ of the second type. Therefore, taking both types into account along with the multiplicity factor for the second type, we again obtain the number $2^{2g}$, which is equal to the degree of $\ocS_g \to \ocM_g$. 

Below we describe a relation between degenerate spin structures and twisted differentials. Suppose a twisted differential $(C, \eta)$ is in the closure  of 
a stratum $\cH(\vmu)$ that contains a spin component, i.e., when $\vmu$ has even entries only. For a node $q$ joining two components $C_1$ and $C_2$ of $C$, 
by definition $\ord_q \eta_1 + \ord_q \eta_2 = -2$. If both orders are odd, we do nothing at $q$. If both orders are even, we insert an exceptional $\PP^1$ 
at $q$. In particular if $q$ is separating, in this case $\ord_q \eta_1$ and $\ord_q \eta_2$ are both even, because each side of $q$ contains even zeros only, and hence we insert a $\PP^1$ at $q$, which matches the preceding discussion on curves of compact type. Now suppose $\eta_i$ on a component 
$C_i$ of $C$ satisfies that 
$$ (\eta_i) = \sum_j 2m_j z_j  + \sum_k 2n_k q_k + \sum_l (2h_l - 1) q_l, $$
where the $z_j$ are the zeros in the interior of $C_i$, the $q_k$ are the nodes of even order in $C_i$, and the $q_l$ are the nodes of odd order in $C_i$. Consider the bundle 
$$ L_i = \cO\left(\sum_j m_j z_j + \sum_k n_k q_k + \sum_l h_l q_l\right) $$
on $C_i$. Then the collection $(C_i, L_i)$ along with the exceptional components and $\cO(1)$ gives a spin structure $\cL$ on $C$. However, if 
$(C, \eta)$ has a node of odd order, i.e., a node without inserting an exceptional component, then there are two gluing choices at such a node, as described above, hence $\cL$ is only determined by $(C, \eta)$ up to finitely many choices, and its parity may vary with different choices. From the viewpoint of smoothing twisted differentials, it means that different choices of opening up nodes of $C$ may deform $(C, \eta)$ into different connected components of $\cH(\vmu)$. 

The idea behind the above description is as follows. For a node $q$ joining two components $C_1$ and $C_2$, if there is no twist at $q$, i.e., if 
$\ord_q \eta_1 = \ord_q \eta_2 = -1$, then locally at $q$ one can directly take a square root of $\omega_C$. If $\ord_q \eta_1$ and $\ord_q \eta_2$ are both odd, i.e., if the twisting parameter $\ord_{q}\eta_i - (-1)$ is even, then its one-half gives the twisting parameter for the limit spin bundle on $C$. On the other hand if $\ord_q \eta_1$ and $\ord_q \eta_2$ are even, then the twisting parameter $\ord_{q}\eta_i - (-1)$ is not divisible by $2$, hence one has to insert an exceptional $\PP^1$ at $q$, which is twisted once to make the twisting parameters at the new nodes even. As a consequence, the resulting twisted differential restricted to $\PP^1$ is $\cO(2)$, hence its one-half is the bundle $\cO(1)$ encoded in the degenerate spin structure. The reader may refer to \cite{FP} for a detailed explanation. 

\subsection{Degeneration of hyperelliptic structures}

Next we describe how hyperelliptic structures degenerate. Recall that the closure of the locus of hyperelliptic curves of genus $g$ 
in $\ocM_g$ can be identified with the moduli space $\tcM_{0,2g+2}$ parameterizing stable rational curves with $2g+2$ unordered markings, where 
the markings correspond to the $2g+2$ branch points of hyperelliptic covers. On the boundary of the moduli spaces, hyperelliptic covers degenerate to admissible double covers of stable genus zero curves in the setting of Harris-Mumford (\cite{HM}). Therefore, Weierstrass points on smooth hyperelliptic curves degenerate to ramification points in such admissible hyperelliptic covers, and the limits of a pair of hyperelliptic conjugate points remain to be conjugate in the limit admissible cover, see Figure~\ref{fig:adm}. 

\begin{figure}[h]
    \centering
    \includegraphics[scale=0.8]{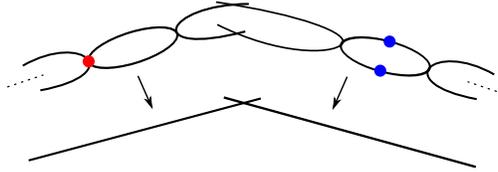}
    \caption{\label{fig:adm} A limit of Weierstrass points (labeled by red) and a limit of pairs of conjugate points (labeled by blue) in a hyperelliptic admissible double cover.}
    \end{figure}

\subsection{Spin and hyperelliptic structures for the principal boundary of type I}
\label{subsec:parity-I}

Let $\cC =  (m_1, m_2, \{a'_i, a''_i\}_{i=1}^p)$ be an admissible configuration of type I for a stratum $\cH(\vmu)$. 
Suppose $(C, \eta)$ is a twisted differential contained in $\Delta(\vmu, \cC)$. By the description of $\Delta(\vmu, \cC)$ in 
Section~\ref{ss:principal-boundaryI}, $C$ consists of $p$ components $C_1, \ldots, C_p$, each of genus $g_i \geq 1$ 
with $g_1 + \cdots + g_p = g$, attached to a rational component $R$, and $\eta_i$ is the differential of $\eta$ restricted to $C_i$ satisfying that $(\eta_i) = (2g_i - 2) q_i$, where $q_i$ is the node joining $C_i$ with $R$. 

Consider the case when $\vmu$ has even entries only. Then $\cH(\vmu)$ contains an even spin component and an odd spin component (and possibly a hyperelliptic component). This parity distinction can be extended to the principal boundary $\Delta(\vmu, \cC)$, see \cite[Lemma 10.1]{EMZ} for a proof using the Arf invariant. For the reader's convenience, below we recap the result and also provide an algebraic proof.

\begin{proposition}
Let $(C, \eta)$ be a twisted differential in $\Delta(\vmu, \cC)$ described as above, with even zeros only. Then 
the parity of $\eta$ is 
$$ \phi(\eta) = \phi(\eta_1) + \cdots + \phi(\eta_p) \pmod{2}. $$
\end{proposition}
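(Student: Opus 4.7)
The plan is to apply the recipe for degenerate spin structures from the preceding subsection to the specific dual graph described in Section~\ref{ss:principal-boundaryI}. Since every entry of $\vmu$ is even, at each node $q_i$ joining $C_i$ to $R$ the orders $\ord_{q_i}\eta_i = 2g_i-2$ and $\ord_{q_i}\eta_R = -2g_i$ are both even. The recipe therefore instructs us to insert an exceptional rational bridge $E_i \cong \PP^1$ equipped with $\cO(1)$ at every node $q_i$. In particular there are no odd-order nodes, so the resulting limit spin bundle $\cL$ is unambiguously determined (no gluing choice enters), and the underlying semistable curve is of compact type.

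Next I would identify the component theta characteristics. On each $C_i$ the divisor $(\eta_i) = (2g_i-2)q_i$ gives
$$L_i \;=\; \cO_{C_i}\bigl((g_i-1)q_i\bigr), \qquad L_i^{\otimes 2} = K_{C_i},$$
so by definition $h^0(C_i,L_i) \equiv \phi(\eta_i) \pmod 2$. On $R \cong \PP^1$ the divisor is $(\eta_R) = m_1\sigma_1 + m_2\sigma_2 - \sum_i (a'_i + a''_i + 2)q_i$, and since $a'_i + a''_i = 2g_i - 2$ (which follows from $\ord_{q_i}\eta_i + \ord_{q_i}\eta_R = -2$) every coefficient is even. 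Halving, $L_R$ has degree
$$\tfrac{m_1+m_2}{2} - \sum_{i=1}^p g_i = -1,$$
where the final equality uses $\sum_i(a'_i+1) = m_1+1$ and $\sum_i(a''_i+1) = m_2+1$. Hence $L_R = \cO_R(-1)$.

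Third, since the semistable model is of compact type, the Cornalba parity is additive across components, so
$$\phi(\eta) \;=\; \sum_{i=1}^p h^0(C_i,L_i) \;+\; h^0(R,L_R) \;+\; \sum_{i=1}^p h^0(E_i,\cO(1)) \pmod 2.$$
Now $h^0(R,\cO_R(-1)) = 0$ and $h^0(E_i,\cO_{\PP^1}(1)) = 2 \equiv 0 \pmod 2$, so $R$ and the bridges contribute trivially. Combining with $h^0(C_i,L_i) \equiv \phi(\eta_i) \pmod 2$ yields the desired identity.

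The subtlety to watch is well-definedness of the limit parity. In general, a node of odd order introduces two gluing choices for $\cL$ that can differ in parity, and one must invoke the smoothing side (via Cornalba's proper cover $\ocS_g \to \ocM_g$ and the Atiyah--Mumford invariance) to match $\phi(\eta)$ with the parity of any smoothing $C^{\varepsilon}$. In the present situation, however, all nodes $q_i$ carry even-order twists on both sides, forcing the insertion of the exceptional bridges and removing the ambiguity, so the calculation above genuinely produces the parity inherited from the ambient stratum component.
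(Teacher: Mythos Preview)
Your proof is correct and follows essentially the same approach as the paper's: both identify the component theta characteristics $L_i = \cO_{C_i}((g_i-1)q_i)$, note that the rational component $R$ contributes zero parity, and invoke additivity of the Cornalba parity over a compact-type curve. The paper's version is terser---it simply remarks that any theta characteristic on $R\cong\PP^1$ has even parity without computing $L_R = \cO_R(-1)$, and does not spell out the exceptional bridges---but the substance is identical.
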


\begin{proof}
Since $(\eta_i) = (2g_i - 2) q_i$, the degenerate spin structure on 
$C_i$ is given by $\cO((g_i-1) q_i)$ in the sense of Cornalba (\cite{Cornalba}). Moreover, on the rational component $R$, any theta characteristic 
has even parity (given by zero). Since $C$ is of compact type, the parity of $\eta$ is equal to the sum of the parities of the $\eta_i$, as claimed.  
\end{proof}

\begin{corollary}
Suppose $\cC$ is of type $I$ and $\vmu$ contains only even zeros. Then differentials in the thick part of $\cH(\vmu)$ degenerate to twisted differentials in $\Delta(\vmu, \cC)$ with the same parity. 
\end{corollary}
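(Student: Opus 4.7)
The plan is to combine the preceding Proposition (which already computes $\phi(\eta)=\phi(\eta_1)+\cdots+\phi(\eta_p)\pmod{2}$ for the limit $(C,\eta)$) with the deformation invariance of the parity of theta characteristics, as packaged by Cornalba's compactification $\ocS_g\to\ocM_g$. By Theorem~\ref{thm:principal-I}, every $C^{\varepsilon}$ in the thick part of $\cH(\vmu)$ is the generic fiber of a one-parameter smoothing $\pi\colon\mathcal{X}\to\Delta$ whose central fiber is the pointed stable curve $C$ underlying some $(C,\eta)\in\Delta(\vmu,\cC)$, and it thus suffices to prove that the parity is constant along $\pi$.

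The key structural observation I would leverage is that $C$ is of compact type: the rational component $R$ is joined to each $C_i$ at the single separating node $q_i$, so the dual graph is a tree. Following the twisting recipe from Section~\ref{sec:parity} I would insert an exceptional $\PP^1$-bridge at each $q_i$ for which $a'_i+a''_i$ is even (equivalently, at those nodes where both twisting parameters are even), producing a semistable model that remains of compact type. On such a model the Cornalba limit spin bundle $\mathcal{L}_0$ assembled from $\eta$ is unambiguous---no gluing choices arise at separating nodes, so $\ocS_g\to\ocM_g$ is unramified over $[C]$---and its parity is additive over the components, contributing $0$ on $R$ and on each exceptional $\PP^1$-bridge and $\phi(\eta_i)$ on each $C_i$.

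Pulling back the universal spin cover along $\Delta$, after the semistable modification and possibly a finite base change to accommodate the exceptional components, yields a line bundle on the total space of $\pi$ whose special fiber is $\mathcal{L}_0$ and whose generic fiber is the theta characteristic $\cO(k_1\sigma_1+\cdots+k_n\sigma_n)$ attached to $C^{\varepsilon}\in\cH(\vmu)$. By Atiyah--Mumford the parity is locally constant in this family, and combined with the preceding Proposition this identifies $\phi(C^{\varepsilon})$ with $\phi(\eta_1)+\cdots+\phi(\eta_p)\pmod{2}$, as desired.

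The delicate step is the explicit identification of $\mathcal{L}_0$ with the Cornalba limit of the spin bundle of $C^{\varepsilon}$, rather than with some other degenerate spin bundle supported on the same pointed stable curve. This matching is exactly the content of the twisting/halving discussion at the end of Section~\ref{sec:parity} and has been carried out in \cite{FP}; it amounts to checking that squaring $\mathcal{L}_0$ recovers the relative dualizing sheaf of the semistable enlargement of $\mathcal{X}$, which is transparent from the divisor descriptions of $\eta$ on each component. Once this compatibility is granted, the parity comparison is immediate.
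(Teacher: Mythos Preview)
Your argument is correct and follows the same route the paper has in mind. In the paper the Corollary is stated with no separate proof: it is meant to be an immediate consequence of the preceding Proposition (which computes $\phi(\eta)=\sum_i\phi(\eta_i)$ on the compact-type limit) together with the deformation invariance of parity packaged in Cornalba's $\ocS_g\to\ocM_g$. Your write-up is simply a careful unpacking of that implication, invoking Theorem~\ref{thm:principal-I} to place $C^{\varepsilon}$ in a one-parameter family over $(C,\eta)$ and then quoting Atiyah--Mumford along the family.

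Two small remarks on wording, neither of which affects the validity. First, in the present setting every $a'_i+a''_i$ is automatically even (it equals $2g_i-2$ minus a sum of even $m_j$'s), so your clause ``for which $a'_i+a''_i$ is even'' is always satisfied; you may as well say you insert an exceptional $\PP^1$ at every $q_i$. Second, the assertion that $\ocS_g\to\ocM_g$ is unramified over $[C]$ is delicate at the level of coarse spaces when exceptional components are present, but what you actually use---that the limit spin structure is uniquely determined (no gluing ambiguity at separating nodes) and that its parity coincides with that of the generic fiber---holds regardless, and this is exactly the content of the compact-type discussion in Section~\ref{sec:parity}.
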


Note that for the parity discussion we only require that $a'_i + a''_i$ is even for each $i$, and there is no other requirement for the individual values of $a'_i$ and $a''_i$. 

Next we consider hyperelliptic components. Since configurations of type I require at least two distinct zeros, here we only need to treat the case $\vmu = (g-1, g-1)$, which contains a hyperelliptic component $\cH^{\hyp}(g-1, g-1)$ (and possibly spin components if $g$ is odd). 

The following result is a reformulation of \cite[Lemma 10.3]{EMZ}. Here we again provide an algebraic proof. 

\begin{proposition}
\label{prop:hyp-I}
Suppose $(C, \eta)$ is a twisted differential contained in $\Delta(g-1, g-1, \cC)$. Then differentials 
in the thick part of $\cH^{\hyp}(g-1, g-1)$ can degenerate to $(C, \eta)$ if and only if either 
\begin{itemize}
\item $p = 1$, $(C_1, \eta_1) \in \cH^{\hyp}(2g-2)$, $a'_1 = a''_1 = g-1$, or 
\item $p = 2$, $ (C_i, \eta_i) \in \cH^{\hyp}(2g_i-2)$, $a'_i = a''_i = g_i - 1$ for $i= 1, 2$. 
\end{itemize}
\end{proposition}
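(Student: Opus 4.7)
The strategy is to exploit the hyperelliptic involution $\tau$ of each $C^\varepsilon \in \cH^{\hyp}(g-1, g-1)$, which satisfies $\tau^*\omega^\varepsilon = -\omega^\varepsilon$ and interchanges $\sigma_1 \leftrightarrow \sigma_2$, and which extends to an involution of the limit curve $C$ with $\tau^*\eta = -\eta$ via the theory of Harris-Mumford admissible double covers. Since $R$ is the only component of $C$ containing both $\sigma_1$ and $\sigma_2$, it is preserved by $\tau$; thus $\tau|_R$ is a nontrivial involution of $\bbP^1$ swapping $\sigma_1 \leftrightarrow \sigma_2$, with exactly two fixed points, and it permutes the nodes $q_1, \ldots, q_p$ while preserving pole orders of $\eta_R$.

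For the necessity direction, the key step is to prove that $\tau|_R$ fixes every $q_i$. Suppose instead that $\tau$ swaps some pair $q_i \leftrightarrow q_j$; then $\tau$ induces an isomorphism $(C_i, \eta_i) \cong (C_j, -\eta_j)$, and $C_i \sqcup C_j$ covers a single component of $C/\tau$ by an unramified double cover (no fixed points of $\tau$ lie on $C_i \cup C_j$). Riemann-Hurwitz forces this quotient component to have genus $g_i = g_j$. But $C/\tau$ is a degeneration of the hyperelliptic target $\bbP^1$, hence a tree of rational curves of arithmetic genus zero, forcing $g_i = g_j = 0$ and contradicting $g(v_i) \geq 1$. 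So every $q_i$ is fixed, and since a $\bbP^1$-involution has only two fixed points, $p \in \{1, 2\}$. In either case $\tau$ restricts to a hyperelliptic involution on $C_i$ fixing $q_i$ as a Weierstrass point, hence $(C_i, \eta_i) \in \cH^{\hyp}(2g_i - 2)$. For $p = 1$ the equalities $a'_1 = a''_1 = g - 1$ follow from $\sum (a'_i + 1) = \sum (a''_i + 1) = g$. For $p = 2$, the $\tau$-invariance of the wedge adjacent to each fixed node $q_i$ identifies the angle at $\sigma_1$ with that at $\sigma_2$ inside the wedge, giving $a'_i = a''_i$; combined with $a'_i + a''_i + 2 = 2g_i$ coming from the twist at $q_i$, this yields $a'_i = a''_i = g_i - 1$.

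For sufficiency, I coordinatize $R \cong \bbP^1$ so that $\tau|_R: z \mapsto -z$ fixes $q_1 = 0$ (and $q_2 = \infty$ when $p = 2$) and swaps $\sigma_1 = 1 \leftrightarrow \sigma_2 = -1$. The essentially unique differential from Proposition~\ref{prop:unique-moduli-typeI} then takes the symmetric form
\[
\eta_R = c \cdot \frac{(z-1)^{g-1}(z+1)^{g-1}}{z^{2g_1}}\, dz,
\]
and since only even powers of $z$ appear in the numerator, $\Res_{q_1}\eta_R = 0$ (and similarly at $q_2 = \infty$ in the $p=2$ case), while $\tau|_R^*\eta_R = -\eta_R$. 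Gluing $R$ to the hyperelliptic components $(C_i, \eta_i)$ along the fixed nodes $q_i$ produces a global $\pm\eta$-inverting involution of $(C, \eta)$, and the flat-geometric smoothing of Theorem~\ref{thm:principal-I} can be performed $\tau$-equivariantly, deforming $(C, \eta)$ into the thick part of $\cH^{\hyp}(g-1, g-1)$. The principal obstacle is justifying the equivariance of both the limit and the smoothing --- a priori the hyperelliptic structure need not survive arbitrary deformations --- which I would address by working inside a compactification of the hyperelliptic stratum and verifying directly that the admissible smoothing parameters of \cite{BCGGM1} can be chosen symmetrically under $\tau$.
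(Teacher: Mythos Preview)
Your proof is correct and follows essentially the same approach as the paper: both exploit the limit hyperelliptic structure (you phrase it via the involution $\tau$, the paper via the admissible double cover $\pi$) to force each node $q_i$ to be a ramification point on $R$, hence $p\le 2$, and then read off the angle constraints. Your argument that $q_i$ is $\tau$-fixed via the genus of the quotient component is a mild variant of the paper's observation that a non-rational $C_i$ attached at a single node must map with degree two in an admissible cover; your wedge-preservation argument for $a'_i=a''_i$ is arguably cleaner than the paper's claim that $\tau$ swaps $\gamma_1\leftrightarrow\gamma_2$, and your sufficiency direction is more explicit than the paper's one-line appeal to smoothing.
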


\begin{proof}
Suppose $(C, \eta)$ is a degeneration of differentials from $\cH^{\hyp}(g-1, g-1)$. Then $C$ admits an admissible hyperelliptic double cover $\pi$, where the two zeros $\sigma_1$ and $\sigma_2$ are conjugates under $\pi$. Since each $C_i$ meets the rational component $R$ at a single node $q_i$, and $C_i$ is not rational, by the definition of admissible covers, $q_i$ has to be a ramified node under $\pi$. By the Riemann-Hurwitz formula, $\pi$ restricted to $R$ has only two ramification points, which implies that $p \leq 2$. 

For $p=1$, $C_1$ has genus $g$, and it admits a hyperelliptic double cover with $q_1$ being a ramification point, hence $(C_1, \eta_1) \in \cH^{\hyp}(2g-2)$. Moreover, there is only one saddle connection joining $\sigma_1$ to $\sigma_2$, so the angle condition in the configuration $\cC$ can only be $a'_1 = a''_1 = g-1$. See Figure~\ref{fig:adm-I-1} for this case and the corresponding hyperelliptic admissible cover.  

\begin{figure}[h]
    \centering
      \psfrag{1}{$\sigma_1$}
     \psfrag{2}{$\sigma_2$}
     \psfrag{3}{$q_1$}
      \psfrag{R}{$R$}
       \psfrag{C}{$C_1$}
    \includegraphics[scale=0.8]{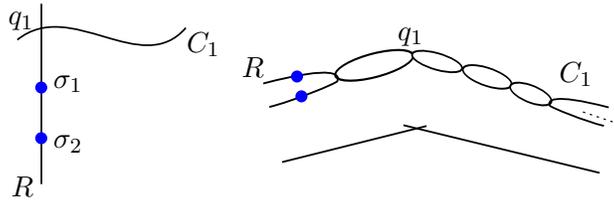}
    \caption{\label{fig:adm-I-1} The case $p=1$ in Proposition~\ref{prop:hyp-I} and the corresponding hyperelliptic admissible cover.}
      \end{figure}

For $p=2$, by the same argument as above we see that $ (C_i, \eta_i) \in \cH^{\hyp}(2g_i-2)$ for $i = 1, 2$. In addition, since the hyperelliptic involution interchanges $\sigma_1$ and $\sigma_2$, it also swaps the two saddle connections $\gamma_1$ and $\gamma_2$ (even on the degenerate component $R$). It follows that $a'_i = a''_i$ for $i = 1,2$. Since $a'_i + a''_i = 2g_i - 2$, we thus conclude that $a'_i = a''_i = g_i - 1$. See Figure~\ref{fig:adm-I-2} for this case and the corresponding hyperelliptic admissible cover.  

\begin{figure}[h]
    \centering
      \psfrag{1}{$C_1$}
     \psfrag{2}{$C_2$}
     \psfrag{r}{$\sigma_1$}
      \psfrag{s}{$\sigma_2$}
       \psfrag{a}{$q_1$}
        \psfrag{b}{$q_2$}
           \psfrag{R}{$R$}
       \includegraphics[scale=0.8]{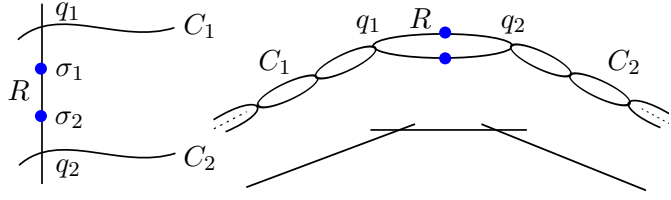}
    \caption{\label{fig:adm-I-2} The case $p=2$ in Proposition~\ref{prop:hyp-I} and the corresponding hyperelliptic admissible cover.}
      \end{figure}

Conversely if $(C, \eta)$ belongs to one of the two cases, the smoothing operation in the proof of Theorem~\ref{thm:principal-I} implies that nearby flat surfaces after opening up the nodes are contained in $\cH^{\hyp}(g-1, g-1)$.
\end{proof}

Denote by $\Delta^{\hyp}(\cdot)$, $\Delta^{\even}(\cdot)$, and $\Delta^{\odd}(\cdot)$ the respective loci of twisted differentials in the principal boundary that are degenerations from hyperelliptic and spin components as specified in the above propositions. We summarize our discussion as follows. 

\begin{corollary}
Let $\cC$ be an admissible configuration of type I for $\cH(\vmu)$. Then the principal boundary $\Delta(\vmu, \cC)$ satisfies the following description: 
\begin{itemize}
\item Suppose $g$ is odd. 
\subitem For $\cC = (m_1 = m_2 = g-1, p = 1, a'_1 = a''_1 = g-1)$ or $\cC = (m_1 = m_2 = g-1, p = 2, a'_i = a''_i = g_i - 1)$ with $g_1 + g_2 = g$, $\Delta(g-1, g-1, \cC)$ is a disjoint union of $\Delta^{\hyp}(g-1, g-1, \cC)$, $\Delta^{\odd}(g-1, g-1, \cC)$, and 
$\Delta^{\even}(g-1, g-1, \cC)$. 
\subitem For all the other types $\cC$, $\Delta(g-1, g-1, \cC)$ is a disjoint union of $\Delta^{\odd}(g-1, g-1, \cC)$ and $\Delta^{\even}(g-1, g-1, \cC)$. 

\item Suppose $g$ even. 
\subitem For $\cC = (m_1 = m_2 = g-1, p = 1, a'_1 = a''_1 = g-1)$ or $\cC = (m_1 = m_2 = g-1, p = 2, a'_i = a''_i = g_i - 1)$ with $g_1 + g_2 = g$, $\Delta(g-1, g-1, \cC)$ is a disjoint union of $\Delta^{\hyp}(g-1, g-1, \cC)$ and  $\Delta^{\nonhyp}(g-1, g-1, \cC)$. 
\subitem For all the other types $\cC$, $\Delta(g-1, g-1, \cC)$ coincides with $\Delta^{\nonhyp}(g-1, g-1, \cC)$. 

\item For all the remaining types $\cC$ and $\vmu$ with even entries only,  $\Delta(\vmu, \cC)$ is a disjoin union of 
$\Delta^{\odd}(\vmu, \cC)$ and $\Delta^{\even}(\vmu, \cC)$. 
\end{itemize}
\end{corollary}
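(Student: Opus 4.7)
The plan is to assemble the corollary from three ingredients already at hand: the Kontsevich--Zorich classification of connected components of $\cH(\vmu)$ recalled above, the parity formula $\phi(\eta)=\sum_{i=1}^p \phi(\eta_i)\pmod 2$ from the preceding proposition, and the hyperelliptic characterization in Proposition~\ref{prop:hyp-I}. Theorem~\ref{thm:principal-I} guarantees that every point of $\Delta(\vmu,\cC)$ is smoothable to a flat surface in the thick part of $\cH(\vmu)$, and conversely that all such limits lie in $\Delta(\vmu,\cC)$; hence the decomposition of $\cH(\vmu)$ into connected components pulls back to a decomposition of the principal boundary into the corresponding subloci $\Delta^{\hyp}$, $\Delta^{\odd}$, $\Delta^{\even}$, $\Delta^{\nonhyp}$.

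I would first argue the parity piece. Since parity is a deformation invariant (Atiyah, Mumford) and since the smoothing family constructed in Theorem~\ref{thm:principal-I} is connected, the value $\phi(\eta)=\sum_i\phi(\eta_i)$ computed on $(C,\eta)\in\Delta(\vmu,\cC)$ agrees with the parity of any nearby smooth differential. This makes the decomposition $\Delta(\vmu,\cC)=\Delta^{\odd}\sqcup\Delta^{\even}$ well defined and disjoint whenever $\vmu$ has only even entries; it is also clearly exhaustive because each $\eta_i$ has an even canonical divisor $(2g_i-2)q_i$, so each parity is defined.

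Next I would handle the hyperelliptic cases. For $\vmu=(g-1,g-1)$, Proposition~\ref{prop:hyp-I} pins down exactly which configurations $\cC$ admit a hyperelliptic smoothing: only $p=1$ with $a'_1=a''_1=g-1$ or $p=2$ with $a'_i=a''_i=g_i-1$. Combined with the Kontsevich--Zorich list this immediately gives the stated splittings: for $g$ odd, $\cH(g-1,g-1)=\cH^{\hyp}\sqcup\cH^{\odd}\sqcup\cH^{\even}$, so $\Delta(g-1,g-1,\cC)$ decomposes as a threefold disjoint union for these special $\cC$ and as a twofold spin union otherwise; for $g$ even, $\cH(g-1,g-1)=\cH^{\hyp}\sqcup\cH^{\nonhyp}$, and the analogous splitting holds. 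For all remaining $\vmu$ with even entries the stratum has only spin components and the classification collapses to $\Delta^{\odd}\sqcup\Delta^{\even}$.

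The only mildly delicate step is to verify that the three or two subloci genuinely are pairwise disjoint. Disjointness of $\Delta^{\odd}$ and $\Delta^{\even}$ follows from the parity formula together with deformation invariance. Disjointness of $\Delta^{\hyp}$ from the spin/nonhyperelliptic pieces is the subtler point, since in low genus a smooth stratum may have coinciding hyperelliptic and even-spin components; here $g\ge 2$ for $\vmu=(g-1,g-1)$ and the KZ classification listed in the text shows these components are genuinely distinct whenever the corollary asserts a threefold splitting. Combining the above observations yields the corollary in each of the enumerated cases, and the decomposition is exhaustive because Theorem~\ref{thm:principal-I} shows every twisted differential in $\Delta(\vmu,\cC)$ arises as a limit from some connected component of $\cH(\vmu)$.
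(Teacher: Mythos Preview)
Your proposal is correct and matches the paper's approach: the paper states this corollary without proof, treating it as an immediate consequence of the Kontsevich--Zorich classification, the parity formula in the preceding proposition, Proposition~\ref{prop:hyp-I}, and Theorem~\ref{thm:principal-I}, which is precisely the assembly you carry out explicitly. Your discussion of disjointness (especially the low-genus caveat) is even slightly more careful than the paper, which only remarks afterward that for small $g$ some of the pieces may be empty.
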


\begin{remark}
In the above corollary, each $\Delta^{\hyp}(\cdot)$, $\Delta^{\even}(\cdot)$, or $\Delta^{\odd}(\cdot)$ can be disconnected, since in general they are unions of products of strata in lower genera. Moreover for small $g$, some of them can also be empty. 
\end{remark}

\subsection{Spin and hyperelliptic structures for the principal boundary of type II}
\label{subsec:parity-II}

Let $\cC = (L, \{a_i, b_i \}, \{c'_j, c''_j \})$ be a configuration of type II for a stratum $\cH(\vmu)$. Consider the case when $\vmu$ has even entries only, i.e., a differential in $\cH(\vmu)$ has odd or even parity. The parity distinction can be extended to the principal boundary $\Delta(\vmu, \cC)$, see \cite[Section 14.1]{EMZ}. Below we recap the results and also provide alternative algebraic proofs. 

Recall the description for $(C, \eta)$ in Theorem~\ref{thm:principal-II}. Let us first simplify the statement of \cite[Lemma 14.1]{EMZ} in our setting. 

\begin{lemma}
\label{lem:spin-type}
Let $(C, \eta)$ be a twisted differential contained in $\Delta(\vmu, \cC)$. 
Suppose $\vmu$ has even zeros only. Then the following conditions hold: 
\begin{itemize}
\item $\eta$ has even zero order at each marking of $C$. 

\item $\eta$ has even zero and pole order at a separating node of $C$. 

\item For all non-separating nodes of $C$, the zero and pole orders of $\eta$ are either all even, or all odd. 
\end{itemize}
\end{lemma}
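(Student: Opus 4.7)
My plan is to treat the three parts in order, with (1) essentially tautological, (2) following from a local genus--parity count on each side of the node, and (3) combining (1) and (2) with the cyclic structure of $\Delta(\vmu,\cC)$ supplied by Theorem~\ref{thm:principal-II}.

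Part (1) is immediate since the markings of $C$ are exactly the zeros prescribed by $\vmu$, whose entries are all even by hypothesis. For (2), let $q$ be a separating node of $C$ and let $C^{+}$ be one of the two connected subcurves obtained by removing $q$, with $a = \ord_{q}\eta^{+}$. Summing $\deg(\eta|_{C'}) = 2g_{C'}-2$ over the irreducible components $C'$ of $C^{+}$ gives an even integer. On the other hand, the same total equals the sum of the orders of $\eta$ at all special points of $C^{+}$: the interior markings contribute an even sum by (1), each internal node $q'$ of $C^{+}$ contributes $\ord_{q'}\eta'_{1} + \ord_{q'}\eta'_{2} = -2$ by the twist condition, and $q$ contributes $a$ exactly once. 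Hence $a$ is even, and so is $-2-a = \ord_{q}\eta^{-}$.

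For (3), the remark following Theorem~\ref{thm:principal-II} records that the dual graph $\Gamma$ of $C$ contains a unique cycle, i.e.\ $\beta_{1}(\Gamma) = 1$. Contract every separating edge of $\Gamma$; since contracting a bridge preserves $\beta_{1}$ and eliminates every bridge, the resulting graph $\Gamma'$ is connected, bridgeless, and of first Betti number $1$, hence is a single cycle whose edges are exactly the non-separating nodes of $C$. Each vertex $v$ of $\Gamma'$ corresponds to a maximal subcurve $B_{v} \subset C$ whose internal nodes are all separating, and applying the argument of (2) with $B_{v}$ in place of $C^{+}$ shows that the sum of the orders of $\eta$ on the $B_{v}$-side at the two non-separating nodes incident to $v$ is even. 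Consequently these two orders share the same parity, and propagating this equality once around the cycle $\Gamma'$ yields that the orders at all non-separating nodes of $C$ share a common parity, as claimed.

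The principal subtlety is arranging the bookkeeping in (3), namely identifying the vertices of the contracted graph $\Gamma'$ with the right subcurves $B_{v}$ on which to rerun the degree--parity count of (2); once the single-cycle topology of $\Gamma'$ is granted, the remainder is a short parity-propagation argument.
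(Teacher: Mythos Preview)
Your proof is correct and follows essentially the same approach as the paper: a degree/parity count for separating nodes, and then a parity-propagation along the unique cycle of the dual graph for the non-separating ones. The only minor difference is that in (2) the paper invokes the explicit Type~II structure (each separating node attaches a smooth tail $C_i$ to a rational component, so the order at the node equals $2g_{C_i}-2$ minus the even marking orders), whereas your argument works directly on the full subcurve $C^{+}$ and hence does not need to know that the tails are irreducible; this makes your version of (2) slightly more self-contained but otherwise the two proofs coincide.
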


\begin{proof}
Because $\vmu$ has even zeros only, and those zeros are the markings of $C$, the first condition holds by definition of twisted differentials. 

Suppose $q$ is a separating node of $C$. By the description of $C$ in Theorem~\ref{thm:principal-II}, $q$ joins a component $C_i$ with 
a rational component $R$. Since the markings in the interior of $C_i$ are even zeros, we conclude that $\ord_q \eta_{C_i}$ has the same parity as $2g_{C_i}-2$, hence it is even, which implies the second condition. 

Finally, recall that all non-separating nodes bound the (unique) cycle in the dual graph of $C$. Since $\eta$ has even order at all the other nodes and at all markings, going along the edges of the cycle one by one, the parity of the order of $\eta$ at one vertex of the cycle determines 
that all the others have the same parity, hence the last condition holds. 
\end{proof}

\begin{remark}
If $\eta$ has even order at all non-separating nodes, then there is no rational component $R$ in the central cycle of $C$ that has a simple polar node. In that case types (i) and (ii) do not appear 
in the description of $C$, which is exactly the way \cite[Lemma 14.1]{EMZ} phrased. 
\end{remark}

Next, we interpret \cite[Lemmas~14.2, 14.3, and 14.4]{EMZ} in terms of Cornalba's spin structures. 

\begin{lemma}
\label{lem:spin-(i)}
Suppose all rational components of $(C, \eta)$ are of type (i). Then the limit spin structure on $(C, \eta)$ has parity 
$$ \phi (C, \eta) = \sum_{i=1}^p \phi (C_i, \eta_{C_i}) + \sum (c'_i + 1) + 1. $$
\end{lemma}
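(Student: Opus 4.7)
The plan is to compute the parity as $h^0(\tilde C, \cL) \pmod 2$, where $\cL$ is Cornalba's limit spin bundle on a quasi-stable model $\tilde C$ of $C$, and organize the computation so that the contribution of each rational component $R_j$ of type (i) produces the ``$+1$'' summands indexed by its figure--eight neighbors, while the overall cyclic dual graph produces the final ``$+1$''. By Lemma~\ref{lem:spin-type}, the separating nodes carry even-order poles while the non-separating (simple-pole) nodes carry odd-order poles, so to build $\tilde C$ we insert exceptional $\PP^1$'s precisely at the separating nodes joining each rational component $R_j$ to the attached figure-eight surfaces $C_i$, and leave the non-separating simple-pole nodes (which form a unique cycle through $R_1,\dots,R_q$) un-blown-up.

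First I would compute $\cL$ on each component. On each $C_i$ one has $(\eta_{C_i}) = c_i q_i + (\text{interior zeros})$ with $c_i$ even, so $\cL|_{C_i}$ is a theta characteristic whose parity is by definition $\phi(C_i,\eta_{C_i})$. On each exceptional $\PP^1$ one has $\cO(1)$, contributing $h^0 = 2$ and absorbing any compatibility condition at its two nodes. On each rational $R_j$, using the formula $(\eta_{R_j}) = \bigl(\sum(c_i+2)\bigr)\sigma - \sum(c_i+2)q_i - q_0 - q_{k+1}$ from Proposition~\ref{prop:II-(i)}, one finds $\cL|_{R_j}\cong \cO_{\PP^1}$ of degree zero with $h^0 = 1$; in particular the simple-pole nodes $q_0, q_{k+1}$ contribute no twist to $\cL|_{R_j}$.

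Next I would stratify the computation of $h^0(\tilde C, \cL)$ via the normalization sequence. Nodes carrying exceptional components cancel mod $2$, reducing the problem to the sections on the cyclic chain $R_1,\dots,R_q$ glued at the $q$ remaining non-separating nodes, modulo the parities of the $C_i$. Since each $\cL|_{R_j}\cong\cO_{\PP^1}$ has a single (up to scale) non-vanishing section, a collection of sections on the cycle is globally compatible iff the ratios of the chosen trivializations at consecutive simple-pole nodes satisfy one consistency condition along the loop. Whether this condition can be met (producing a $1$-dimensional space of global sections on the cycle) or not (producing $0$) is governed precisely by the monodromy of the gluing isomorphisms around the loop; computing this gives the cyclic contribution $\equiv 1 \pmod 2$.

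Finally, to determine that monodromy I would use the flat-geometric smoothing of \cite{BCGGM1}, following the expansion argument in the proof of Proposition~\ref{prop:II-(i)}: at each exceptional node entering $R_j$, the angle $2(c'_i+1)\pi$ measured on one side of the figure-eight determines the identification of the half-plane decomposition of $\eta_{R_j}$ with the local square-root trivialization on $\tilde C$, and the sign of this identification flips by $(-1)^{c'_i+1}$ compared with the canonical one. Tallying these signs along the cycle gives total twist $\prod_i (-1)^{c'_i+1}$, i.e.\ a parity contribution $\sum (c'_i+1)$, and combining with the ``base'' parity $1$ coming from the dimension of the cycle's section space yields the cyclic correction $\sum (c'_i+1)+1$. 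Added to the component contributions $\sum_{i=1}^p \phi(C_i,\eta_{C_i})$, this produces the claimed formula. The main obstacle is step three, namely certifying that the signs contributed at each odd-order node by the flat-geometric smoothing really match the $(-1)^{c'_i+1}$ predicted above and that no extra sign enters from closing the loop; this should follow from a careful local analysis of the square root of $\omega_C$ in a neighborhood of a simple-pole node, comparing it with the expanding half-cylinder picture used to recover $\eta_{R_j}$.
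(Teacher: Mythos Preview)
Your overall architecture matches the paper's proof: separate the parity into contributions from the tails $C_i$ (each attached at a separating, even-order node, hence carrying its own theta characteristic with parity $\phi(C_i,\eta_{C_i})$) and from the central genus-one cycle $S$ of rational components joined at simple-pole nodes, where no exceptional $\PP^1$ is inserted and $\cL|_{R_j}\cong\cO_{\PP^1}$. The paper makes exactly this reduction and observes, as you do, that the parity of $\cL|_S$ is governed by a single binary gluing choice around the loop, giving $h^0(S,L)\in\{0,1\}$.

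Where you diverge is in the final step: pinning down \emph{which} of the two values occurs as a function of the $c'_i$. You propose to read off a sign $(-1)^{c'_i+1}$ at each figure-eight attachment from the flat-geometric smoothing, assemble these into a loop monodromy, and compare with the canonical trivialization of the square root at each simple-pole node. The paper does \emph{not} carry out such an algebraic monodromy computation; instead it explicitly passes to the topological side and invokes the Arf-invariant calculation of \cite[Lemma~14.2]{EMZ}, where one chooses a symplectic pair $(\alpha,\beta)$ for the central handle and computes $\ind(\alpha)$ directly from the angle data $\{c'_i\}$. So your proposal is more ambitious in that it tries to stay entirely within Cornalba's framework.

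That said, your acknowledged obstacle is real and is precisely the content the paper outsources. The assertion that each figure-eight contributes exactly $(-1)^{c'_i+1}$ to the gluing sign of $\cL$ along the cycle is not obvious from what you have written: the exceptional $\PP^1$'s sit at the \emph{separating} nodes $q_i$, not on the cycle, so it is not immediate why the angle $2(c'_i+1)\pi$ should govern the identification of $\cL|_{R_j}$ at the \emph{non-separating} nodes $q_0,q_{k+1}$. To make your approach rigorous you would need a local computation relating the flat trivialization of a square root of $dz/z$ on each side of a simple-pole node to the half-plane decomposition of $\eta_{R_j}$, and then track how traversing $R_j$ past each attached $C_i$ rotates that trivialization. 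Absent that, the cleanest fix is to do what the paper does: compute $(\ind(\alpha)+1)(\ind(\beta)+1)$ for the central handle via the Arf invariant, where $\beta$ is a saddle connection (index $0$) and the index of the transverse curve $\alpha$ picks up the $\sum(c'_i+1)$.
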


\begin{proof}
We first remark that since $\eta$ has even zeros only, $c'_i + c''_i$ is even for all $i$, hence using $c'_i$ or $c''_i$ does not matter for the parity formula. 

Next, since only type (i) appears in the description of $C$, each $C_i$ is a tail of $C$, which is attached to $C$ at a separating node, hence the limit spin structure on $C_i$ is generated by one-half of $(\eta_{C_i})$, and it contributes $\phi(C_i, \eta_{C_i})$ to the total parity. 

The central cycle $S$ of $C$ is a loop of rational components $R_1, \ldots, R_k$ in a cyclic order. At each node $q_i$ joining 
$R_i$ to $R_{i+1}$, $\eta$ has a simple pole on the two branches of $q_i$ with opposite residues $\pm r$, hence in the limit spin structure we preserve $q_i$ and do not insert an exceptional $\PP^1$ component. Therefore, the limit spin structure restricted to $S$ is a square root $L$ of $\omega_S$, where $S$ has arithmetic genus one, and $L|_{R_i} = \cO_{R_i}$. Starting from $R_1$, identify the fibers of $\cO_{R_1}$ 
and $\cO_{R_2}$ at $q_1$, then identify the fibers of $\cO_{R_2}$ and $\cO_{R_3}$ at $q_2$, so on and so forth. The last identification between the fibers of $\cO_{R_k}$ and $\cO_{R_1}$ at $q_k$ has two choices, which makes $h^0(S, L) = 0$ or $1$. Hence the parity of the spin structure on $S$ varies with the gluing choice, where the gluing choice is actually determined by the configuration data $\{ c'_i, c''_i\}$. By analyzing the Arf invariant, the parity contribution from $S$ is $\sum (c'_i + 1) + 1$, see the proof of \cite[Lemma 14.2]{EMZ} for details. 
\end{proof} 

Now we consider the last alternate conditions in Lemma~\ref{lem:spin-type}. 

\begin{lemma}
\label{lem:spin-even}
Suppose $\eta$ has even order at all non-separating nodes of $C$. Then the parity of the limit spin structure on $(C, \eta)$ is 
$$ \phi (C, \eta) = \sum_{i=1}^p \phi (C_i, \eta_{C_i}), $$
where the $C_i$ are the non-rational components of $C$. 
\end{lemma}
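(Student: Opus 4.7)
The plan is to realize $(C,\eta)$ on a semistable spin model by inserting exceptional $\PP^1$'s at every node, and then to compute $h^0$ of the resulting spin bundle by iterated Mayer--Vietoris, reducing the parity to local contributions on the non-exceptional components.

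First, combining the hypothesis with Lemma~\ref{lem:spin-type} gives that $\eta$ has even order on both branches at every node of $C$, separating as well as non-separating; equivalently, by the remark following that lemma, only type (iii) blocks appear in the description from Theorem~\ref{thm:principal-II}. Next, I would build the limit spin bundle $\cL$ on the semistable model $\widetilde C$ as in the subsection on degeneration of spin structures: insert an exceptional $\PP^1$ carrying $\cO(1)$ at every node of $C$, and on each non-exceptional component $X$ (either one of the $C_i$ or a rational component $R$ of $C$) take $L_X=\cO\bigl(\tfrac{1}{2}(\eta_X)\bigr)$, which is integral because every marking and every node on $X$ has even order. For $X=C_i$, the bundle $L_{C_i}$ is a theta characteristic on $C_i$ with $h^0(C_i,L_{C_i})\equiv \phi(C_i,\eta_{C_i})\pmod{2}$. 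For a rational $R\cong\PP^1$, a direct degree count gives $\deg L_R=\tfrac{1}{2}\deg(\eta_R)=-1$, so $L_R\cong\cO_{\PP^1}(-1)$ and $h^0(R,L_R)=0$.

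The key computational step is to show that the exceptional components contribute nothing to the global $h^0$. For a single exceptional $E\cong\PP^1$ with $\cO_E(1)$ meeting the closure $C'$ of the complement at two points $q',q''$, the Mayer--Vietoris sequence
$$
0\to H^0(\widetilde C,\cL)\to H^0(E,\cO_E(1))\oplus H^0(C',\cL|_{C'})\xrightarrow{\alpha}\cL|_{q'}\oplus\cL|_{q''}
$$
has $\alpha$ surjective, since evaluation $H^0(\PP^1,\cO(1))\to\CC^2$ at two distinct points is an isomorphism; hence $h^0(\widetilde C,\cL)=h^0(C',\cL|_{C'})$. Peeling off every exceptional component in turn leaves the disjoint union of the non-exceptional components, whence
$$
\phi(C,\eta)\equiv \sum_X h^0(X,L_X)\equiv \sum_{i=1}^p \phi(C_i,\eta_{C_i})\pmod{2},
$$
the rational components contributing zero by the previous paragraph.

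The main obstacle I anticipate is that $C$ has a non-trivial cycle in its dual graph, so parity is not automatically additive over components (compare Lemma~\ref{lem:spin-(i)}, where the extra term $\sum(c_i'+1)+1$ appears precisely because simple-pole nodes on the cycle prevent one from inserting exceptional components there). The even-order hypothesis resolves this cleanly: it forces an exceptional $\PP^1$ with $\cO(1)$ on every edge of the dual cycle, and the surjectivity of $\alpha$ shows that these absorb the would-be gluing ambiguity without contributing to the parity.
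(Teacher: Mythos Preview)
Your proposal is correct. The paper's proof and yours share the same underlying idea---the rational pieces contribute nothing to the parity, so $\phi(C,\eta)=\sum_i\phi(C_i,\eta_{C_i})$---but the executions differ. The paper argues in a single sentence that each rational component $R_i$ joining $C_i$ to $C_{i+1}$ ``plays the role of an exceptional component in the limit spin structure, and carries the bundle $\cO(1)$'', and then reads off additivity directly from Cornalba's formalism for compact-type-like behavior. You instead pass to the full semistable model by inserting an exceptional $\PP^1$ at \emph{every} node (so that the original rational components of $C$ carry $\cO(-1)$, not $\cO(1)$), and then peel off the exceptional components one at a time via an explicit Mayer--Vietoris computation of $h^0$. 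Your route is longer but more self-contained: it does not require interpreting a marked $R_i$ (which has at least three special points) as a Cornalba exceptional $\PP^1$, and it makes transparent exactly why the gluing ambiguity responsible for the extra term in Lemma~\ref{lem:spin-(i)} and for the indeterminacy in Lemma~\ref{lem:spin-odd} disappears here---namely, because every edge of the dual cycle carries an exceptional $\PP^1$ whose $H^0(\cO(1))$ surjects onto the two node fibers.
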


\begin{proof}
In this case on each $C_i$ the limit spin structure is generated by one-half of $(\eta_{C_i})$, because $\eta_{C_i}$ has even zeros at the markings and nodes. The rational component $R_i$ joining 
$C_i$ and $C_{i+1}$ plays the role of an exceptional component in the limit spin structure, and carries the bundle $\cO(1)$. Therefore, the total parity 
is the sum of the parities over all $C_i$. 
\end{proof}

\begin{lemma}
\label{lem:spin-odd}
Suppose $\eta$ has odd order at every non-separating node of $C$. Let $N$ be the total number of nearby flat surfaces under the previous smoothing procedure. Then exactly 
$N/2$ of them have odd spin structure and $N/2$ have even spin structure. 
\end{lemma}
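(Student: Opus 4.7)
The plan is to relate the $N$ nearby flat surfaces to the gluing choices for the limit spin bundle of $(C,\eta)$ and then perform a parity count. By hypothesis and Lemma~\ref{lem:spin-type}, the non-separating nodes $q_1,\ldots,q_k$ of $C$ all carry odd order of $\eta$, while the separating nodes all carry even order. Following Section~\ref{sec:parity}, the limit spin bundle $\cL$ is assembled from the canonical choice of a line bundle $L_v$ on each irreducible component of $C$, the insertion of an exceptional $\PP^1$ with $\cO(1)$ at every even-order (separating) node, and a choice of gluing across each of the $k$ odd-order non-separating nodes, where exactly two gluings are available. Under the BCGGM smoothing of \cite{BCGGM1}, each such discrete gluing choice corresponds to a distinct branch of the plumbing at the node, so the $N$ nearby flat surfaces partition into $2^k$ continuous families, one per element of the gluing torsor $(\ZZ/2)^k$; along each family the spin parity is constant by the Atiyah--Mumford deformation invariance.

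Because the dual graph of $C$ has a single cycle, the kernel of the specialization map $H_1(C_t;\ZZ/2)\to H_1(C;\ZZ/2)$ is one-dimensional, so the vanishing cycles $\delta_1,\ldots,\delta_k$ at the $k$ non-separating nodes are all homologous mod $2$ to a single nonzero class $\delta$. The effect of flipping the gluing at $q_i$ is to twist the smoothed spin bundle by the Prym $2$-torsion line bundle $\alpha_{\delta_i}=\alpha_\delta$, so the parity function on the gluing torsor takes the form
\[
\phi(\epsilon_1,\ldots,\epsilon_k)=\phi_0+\Bigl(\sum_{i=1}^k\epsilon_i\Bigr)\cdot q(\alpha_\delta)\pmod{2},
\]
where $q$ denotes the Arf quadratic refinement of the Weil pairing on $H_1(C_t;\ZZ/2)$ associated to the smooth spin structure.

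The proof then reduces to checking $q(\alpha_\delta)=1$, which I would establish by representing $\delta$ on a nearby flat surface as a simple closed curve linking once around each non-separating node of $(C,\eta)$ and computing the index of its unit tangent vector under the Gauss map, as in the Arf-invariant calculation of \cite[Section~14]{EMZ}: the oddness of $\ord_{q_i}\eta$ at each non-separating node contributes an odd integer to the total index of $\delta$, forcing $q(\alpha_\delta)=1$. Granting this, the map $\epsilon\mapsto\sum_i\epsilon_i$ from $(\ZZ/2)^k$ to $\ZZ/2$ is a surjective homomorphism with both fibers of size $2^{k-1}$, hence $\phi$ is also surjective with fibers of the same size. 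Multiplying by the common size of each continuous family, we obtain exactly $N/2$ nearby flat surfaces with even spin structure and $N/2$ with odd spin structure.

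The main obstacle is the identity $q(\alpha_\delta)=1$, which is the precise algebraic incarnation of the hypothesis that every non-separating node of $\eta$ has odd order. An algebraic alternative to the Gauss-map calculation is to track how the two sheets of Cornalba's compactified spin cover $\ocS_g^{+}\sqcup\ocS_g^{-}\to\ocM_g$ meet above our $(C,\eta)$: at a non-separating node with odd-order limit differential, the two local branches of the spin cover are exchanged, which translates exactly into the parity flip encoded by $q(\alpha_\delta)=1$.
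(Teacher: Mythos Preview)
Your overall architecture is sound and, once unwound, is essentially the same as the paper's argument dressed in more structure. Both proofs rest on the single assertion that flipping the gluing of the limit spin bundle at one non-separating node changes the parity; the paper states this directly (invoking the one-nodal discussion of Section~\ref{sec:parity} and \cite[Lemma~14.4]{EMZ}), while you rephrase it as $q(\alpha_\delta)=1$ after identifying each flip with a twist by the common Prym class $\alpha_\delta$. Your observation that all vanishing cycles of the central cycle are homologous mod $2$ is correct and explains cleanly why the parity depends only on $\sum_i\epsilon_i$; this is a nice structural addition that the paper leaves implicit.

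Where your write-up is weaker than the paper's is precisely at the step you flag as the ``main obstacle.'' Your Gauss-map sketch for $q(\alpha_\delta)=1$ is muddled: you describe $\delta$ as ``linking once around each non-separating node'' and say each odd $\ord_{q_i}\eta$ contributes to its index, but $\delta$ is represented by the vanishing cycle at a \emph{single} node $q_i$, so only that one order is relevant; there is no sum over nodes. Likewise, your alternative via $\ocS_g^{\pm}\to\ocM_g$ is not quite right as stated: Cornalba's cover is \emph{un}ramified at boundary points without exceptional components, so the two sheets are not ``exchanged'' by local monodromy---they are simply two distinct limit spin structures, and what must be shown is that they have opposite $h^0$ parity. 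The paper's route here is more direct: the explicit one-nodal computation in Section~\ref{sec:parity} (the decomposition $H^0(C',L')=V_0\oplus\langle s\rangle$, with exactly one gluing preserving $s$) already exhibits the parity jump, and the cycle case follows by freezing all gluings but one. If you want to keep your formulation, the cleanest fix is to compute $\ind(\delta_i)$ for the waist curve of the plumbed neck at $q_i$ and check it is even (so $q(\delta_i)=\ind(\delta_i)+1=1$), rather than invoking contributions from all nodes.
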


\begin{proof}
Let $S$ be the central cycle of $C$. Then $\eta$ has odd zeros and poles at all the nodes of $S$. Hence in the limit spin structure we do not insert an exceptional component at each node of $S$. Therefore, given the spin structure on each component of $S$, we have different gluing choices to form a global spin structure on $C$. When varying the gluing choice over one node of $S$ while keeping the others, the parity of the resulting spin structure differs by one, hence the desired claim follows. See also the proof of \cite[Lemma 14.4]{EMZ} for an argument using the Arf invariant. 
\end{proof}

Next we consider the principal boundary of type II for hyperelliptic components. Below we recap \cite[Lemmas 14.5 and 14.6]{EMZ} and provide algebraic proofs using hyperelliptic admissible covers. 

\begin{lemma}
\label{lem:hyp-1}
Suppose $(C, \eta)$ is in the principal boundary $\Delta(2g-2, \cC)$ for a configuration $\cC$ of type II. Then $(C, \eta)$ is in $\Delta^{\hyp}(2g-2, \cC)$ if and only if it is one of the following types: 
\begin{itemize}
\item[{\rm(1)}] $C$ has two components $C_1$ and $R$ meeting at two nodes $q_1$ and $q_2$, $(C_1, \eta_{C_1}) \in \cH^{\hyp}(g-2,g-2)$ with $q_1$ and $q_2$ as the two zeros, and 
$R$ contains the unique marking $\sigma$. 

\item[{\rm (2)}] $C$ has two components $C_1$ and $E$ meeting at one node $\sigma'$, where $E$ is an irreducible one-nodal curve by identifying two points $q_1$ and $q_2$ in $R$, 
$(C_1, \eta_{C_1}) \in \cH^{\hyp}(2g-4)$ with $\sigma'$ as the zero, and $E$ contains the unique marking $\sigma$. 

\item[{\rm (3)}] $C$ has three components $C_1$, $C_2$, and $R$, where $C_1$ meets $R$ at one node $\sigma'$, $C_2$ meets $R$ at two nodes $q_1$ and $q_2$, $(C_1, \eta_{C_1}) \in \cH^{\hyp}(2g_1 - 2)$ with $\sigma'$ as the zero, $(C_2, \eta_{C_2}) \in \cH^{\hyp}(g_2 - 1, g_2 -1 )$ with $q_1$ and $q_2$ as the two zeros, where $g_1 + g_2 = g-1$, and $R$ contains the unique marking $\sigma$. 
\end{itemize}
\end{lemma}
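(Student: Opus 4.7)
The plan is to extend the strategy of Proposition~\ref{prop:hyp-I} by extracting a degenerate hyperelliptic involution $\tau$ on $(C,\eta)$ and analyzing its action on the unique rational component.

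For the forward direction, assume $(C,\eta)\in \Delta^{\hyp}(2g-2,\cC)$. As a limit of smooth hyperelliptic flat surfaces, $C$ carries an involution $\tau$ with $\tau^{*}\eta = -\eta$, realizing $\pi\colon C \to T = C/\tau$ as an admissible double cover onto a stable curve $T$ of arithmetic genus zero. The unique zero $\sigma$ is a Weierstrass point, hence fixed by $\tau$. By the remark following Theorem~\ref{thm:principal-II}, $C$ contains exactly one rational component $R$, and $\sigma\in R$; hence $\tau$ preserves $R$, and $\tau|_R$ is the unique involution of $R\cong \PP^{1}$ with exactly two fixed points. One of them is $\sigma$; denote the other by $\sigma^{*}$.

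The core of the proof is then to classify the nodes of $C$ lying on $R$ according to whether they are $\tau$-fixed (ramified nodes, Weierstrass on both attached branches) or paired with another node by $\tau$ (unramified swap pairs). Since $\tau|_R$ admits only two fixed points and one of them, $\sigma$, is a smooth marking, at most one node on $R$ can be ramified. Combined with the requirement that the dual graph of $C$ carries a unique central cycle (from the type II structure), a short enumeration leaves only three possibilities: (1)~no ramified node together with one swap pair of nodes both meeting a single non-rational component $C_1$, the loop being formed by the two edges between $C_1$ and $R$; (2)~one ramified node meeting $C_1$ together with a self-node of $R$ whose two preimages are swapped by $\tau|_R$, the loop being formed through the self-node of $R$; or (3)~one ramified node meeting $C_1$ together with a swap pair of nodes both meeting a second component $C_2$, the loop being formed by the two edges between $C_2$ and $R$. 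Any other distribution of ramified and swap-paired nodes either produces more than one loop in the dual graph (for instance when swap pairs are distributed across two distinct components), contradicting the uniqueness of the central cycle, or produces no loop at all, which is incompatible with a type II configuration. In each surviving case, the hyperelliptic action on the non-rational components is forced: at a ramified node, $\tau$ fixes the node and makes it a Weierstrass zero of the attached component, while at a swap pair, $\tau$ swaps the two nodes and makes them conjugate zeros. This yields the claimed hyperelliptic types, and the genera are determined by the arithmetic-genus formula $g_{a}(C)=g$ together with the single loop.

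For the converse, in each of the three types I would assemble a global involution $\tau$ on $C$ by gluing the hyperelliptic involutions on the smooth components $C_1$ (and $C_2$ in Case~(3)) with the involution on $R$ that fixes $\sigma$ and $\sigma^{*}$; compatibility at the nodes is immediate from the Weierstrass or conjugate hypothesis, and $\tau^{*}\eta = -\eta$ holds componentwise. The smoothing procedure of Theorem~\ref{thm:principal-II} is local near the short saddle connections and can be carried out $\tau$-equivariantly, exhibiting $(C,\eta)$ as a limit of smooth flat surfaces in $\cH^{\hyp}(2g-2)$. I expect the main difficulty to be Case~(2): the self-node of $R$ has both of its preimages swapped by $\tau|_R$, so the local quotient in $T$ is smooth rather than nodal, and this behavior must be reconciled with the admissible cover structure on $T$. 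I would handle this by writing down an explicit one-parameter family of smooth hyperelliptic equations $y^{2}=f_{t}(x)$ in which two branch points coalesce, so that the stable reduction of the admissible cover bubbles off a $\PP^{1}$ component of $T$ carrying the coalescing branch points; this extra component contracts in the Deligne--Mumford limit of the source, matching the description of $E$ in Case~(2) and certifying it as a genuine limit of smooth surfaces from $\cH^{\hyp}(2g-2)$.
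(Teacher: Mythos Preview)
Your approach is essentially the paper's: both pass to the admissible hyperelliptic double cover $\pi\colon C\to T$ (equivalently, the involution $\tau$), observe that $\pi|_{R}$ is a degree-two map of $\PP^{1}$ with exactly two ramification points, one of which is $\sigma$, and then enumerate how the remaining nodes on $R$ can sit relative to the second ramification point $\sigma'$. The paper's three cases arise exactly as you list them. Your extra caution in Case~(2) is well taken: the self-node of $E$ is \emph{not} literally over a node of $T$ (the two branches are swapped, so the quotient is smooth there), and one does need a semistable bubbling of the target to get an honest admissible cover. The paper simply asserts that the cover ``can easily be constructed'' and omits this point.

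There is, however, a small gap in your forward enumeration. You classify nodes on $R$ as ramified or as swap pairs and then count loops in the dual graph, but you never exclude the possibility that a swap pair $(q_{1},q_{2})$ attaches $R$ to two \emph{distinct} non-rational components $C_{a}$ and $C_{b}$, with $\tau$ interchanging them. Such a pair contributes no loop (each $q_{i}$ is separating), so it can coexist with a genuine loop coming from a second swap pair or a self-node, producing configurations outside your three cases while still having a unique cycle. The paper closes this by the observation --- already used in the proof of Proposition~\ref{prop:hyp-I} --- that a non-rational component of $C$ must map onto its image in the genus-zero tree $T$ with the full degree two (degree one would force it to be rational), so every node at which it meets $R$ is ramified. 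Hence any non-rational tail is attached precisely at $\sigma'$, which caps the number of tails at one. You should insert this step; your loop-counting alone does not supply it.
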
 

See Figure~\ref{fig:hyp-II-1} for the underlying curve $C$ in the three cases above. 
\begin{figure}[h]
    \centering
      \psfrag{1}{$q_1$}
     \psfrag{2}{$q_2$}
      \psfrag{3}{$q_1\sim q_2$}
     \psfrag{t}{$\sigma'$}
      \psfrag{s}{$\sigma$}
       \psfrag{D}{$C_2$}
        \psfrag{C}{$C_1$}
           \psfrag{R}{$R$}
           \psfrag{E}{$E$}
       \includegraphics[scale=0.8]{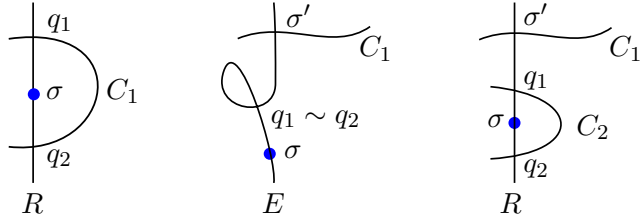}
    \caption{\label{fig:hyp-II-1} The underlying curve $C$ from left to right for cases (1), (2) and (3) in Lemma~\ref{lem:hyp-1}.}
      \end{figure}
 
\begin{proof}
Suppose $(C, \eta)$ is in $\Delta^{\hyp}(2g-2, \cC)$. Then it admits a hyperelliptic admissible double cover $\pi$. Since $C$ has a unique marking $\sigma$, it has only one rational component $R$, and $R$ has to contain $\sigma$. The cover $\pi$ restricted to $R$ has two ramification points, one of which is $\sigma$, and let $\sigma'$ be the other. Denote by $q_1$ and $q_2$ the two polar nodes in $R$ that arise in the description of 
degeneration types (i), (ii), or (iii). By definition of admissible cover, $q_1$ and $q_2$ are hyperelliptic conjugates under $\pi$. Moreover, any tail of $C$ attached to $R$ has to be attached at the ramification point $\sigma'$. 

Based on the above constraints, there are three possibilities for $\pi$ as follows. First, $q_1$ and $q_2$ join $R$ to a different component, and there is no tail attached at $\sigma'$, which gives case (1). On the other hand if there is a tail attached at $\sigma'$, it gives case (3). Finally one can identify $q_1$ and $q_2$ to form a self node of $R$, and attach a tail at $\sigma'$ to ensure that the genus of the total curve is at least two, which gives case (2). By analyzing the corresponding admissible cover in each case, we see that the newly added components along with their differentials satisfy the desired claim. 

Conversely if $(C, \eta)$ is one of the three cases, one can easily construct the corresponding hyperelliptic admissible cover, and we omit the details. 
\end{proof}

\begin{lemma}
\label{lem:hyp-2}
Suppose $(C, \eta)$ is in the principal boundary $\Delta(g-1, g-1, \cC)$ for a configuration $\cC$ of type II. Then $(C, \eta)$ is in 
$\Delta^{\hyp}(g-1, g-1, \cC)$ if and only if it is one of the following types: 
\begin{itemize}
\item[{\rm(1)}] $C$ has three components $C_1$, $R_1$, and $R_2$, where each $R_i$ meets $C_1$ at one node, $R_1$ and $R_2$ meet 
at one node, $(C_1, \eta_{C_1}) \in \cH^{\hyp}(g-2,g-2)$ with the two zeros at the nodes of $C_1$, and each $R_i$ 
contains a marking $\sigma_i$ for $i= 1, 2$. 

\item[{\rm (2)}] $C$ has four components $C_1$, $C_2$, $R_1$, and $R_2$, where each $C_i$ meets each $R_j$ at one node for $i, j = 1, 2$, 
$(C_i, \eta_{C_i}) \in \cH^{\hyp}(g_i-1, g_i-1)$ with the two zeros at the nodes of $C_i$ and $g_1 + g_2 = g-1$, and each $R_j$ contains a marking 
$\sigma_j$. 
\end{itemize}
\end{lemma}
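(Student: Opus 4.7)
The plan is to follow the blueprint of Lemma~\ref{lem:hyp-1}, working with hyperelliptic admissible double covers.

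Suppose first that $(C,\eta)\in\Delta^{\hyp}(g-1,g-1,\cC)$, so there is a hyperelliptic admissible double cover $\pi\colon C\to T$ with involution $\tau$, and $T$ a rational tree. Since the two zeros $\sigma_1,\sigma_2$ are hyperelliptic conjugates in the smooth stratum, their limits satisfy $\tau(\sigma_1)=\sigma_2$. Because each rational component of $C$ contains exactly one marking (by Theorem~\ref{thm:principal-II} and the remark after it), $\tau$ interchanges $R_1\leftrightarrow R_2$ as components. In particular $\tau|_{R_i}$ is an isomorphism onto $R_{3-i}$, not an involution, so the only potential ramification of $\pi$ on $R_1\cup R_2$ is at nodes lying on both components (switching nodes). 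Note also that since $\cC$ involves both zeros, the curve $C$ has exactly two rational components, and the unique cycle in the dual graph must pass through both.

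I next enumerate the possible cycle structures: (a) two simple polar edges directly between $R_1$ and $R_2$; (b) one simple polar edge together with one shared non-rational component $C_1$; or (c) two shared non-rational components $C_1,C_2$. I rule out (a) by Riemann-Hurwitz for the admissible cover: the only ramification would come from the two switching nodes (each contributing $2$ to the branch divisor via the local model $xy=0$, $\tau(x,y)=(y,x)$), giving $\deg B=4$, incompatible with the required $\deg B=2g+2\geq 6$. Figure-eight tails on the $R_i$ appear in $\tau$-swapped pairs and contribute no ramification, so they cannot remedy the deficit.

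For cases (b) and (c), I show that $\tau$ fixes each non-rational component individually. In case (b) this is automatic since $C_1$ is the only non-rational component. In case (c), if $\tau$ swapped $C_1\leftrightarrow C_2$, then $T$ would contain $C_1/\tau\cong C_1$ of positive genus, contradicting $g_T=0$. Hence each $C_i$ inherits a hyperelliptic involution, and this involution swaps the two nodes $C_i\cap R_1,\,C_i\cap R_2$ because $\tau$ swaps $R_1\leftrightarrow R_2$. These two nodes therefore form a conjugate pair of zeros on $C_i$, giving $(C_i,\eta_{C_i})\in\cH^{\hyp}(g_i-1,g_i-1)$; the genera sum to $g-1$ in case (c) by the arithmetic genus formula, while in case (b) the single non-rational component gives $\cH^{\hyp}(g-2,g-2)$. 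A further Riemann-Hurwitz count (using that a hyperelliptic component of genus $h$ contributes $2h+2$ Weierstrass points to the branch divisor) then forces no additional figure-eight tails, recovering precisely the two cases of the lemma.

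The converse is constructive: given the data of case (1) or (2), I assemble $\pi\colon C\to T$ from the hyperelliptic involutions on the $C_i$ together with an isomorphism $R_1\xrightarrow{\sim}R_2$ compatible with $\tau^*\eta=-\eta$; the existence and essential uniqueness of this isomorphism follows from Proposition~\ref{prop:unique-moduli-typeII}. These local data glue consistently along every node, with the simple polar node between $R_1$ and $R_2$ in case (1) becoming a switching node of $\pi$, and nearby smoothings then land in $\cH^{\hyp}(g-1,g-1)$ via the flat-geometric smoothing of~\cite{BCGGM1}. The main obstacle will be the careful Riemann-Hurwitz bookkeeping for admissible double covers, in particular the contributions of switching nodes (weight $2$) versus smooth Weierstrass fixed points (weight $2h+2$ on a hyperelliptic genus-$h$ component); this accounting is what both excludes case (a) and rules out tails in cases (b) and (c).
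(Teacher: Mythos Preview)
Your approach is correct and follows the same admissible-cover strategy as the paper, but the bookkeeping differs. The paper's argument is shorter: once $\tau$ swaps $R_1\leftrightarrow R_2$, it observes that $\pi|_{R_i}$ has degree one, so any figure-eight tail $C_j$ attached to $R_i$ at a single node would be $\tau$-swapped with a tail on $R_{3-i}$, forcing $C_j\to\pi(C_j)$ to have degree one and hence $C_j\cong\PP^1$ --- contradicting $g(C_j)\geq 1$. This rules out tails immediately, without any Riemann--Hurwitz count. With tails gone, each $R_i$ has exactly two nodes, and the case of both nodes shared (your case (a)) is excluded simply because $p_a(C)=1<g$. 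Your Riemann--Hurwitz route reaches the same conclusions but is more circuitous; in particular, the step ``swapped tails contribute no ramification, so the deficit persists'' is formally valid but obscures the real obstruction, which is that $\tau$-swapped positive-genus tails already contradict $T$ being a rational tree before any branch-divisor count. On the other hand, you are more explicit than the paper on one point: in case~(c) you argue directly that $\tau$ cannot swap $C_1\leftrightarrow C_2$ (else $T$ would acquire a positive-genus component), whereas the paper absorbs this into ``by analyzing the corresponding admissible cover.''
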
 

See Figure~\ref{fig:hyp-II-2} for the underlying curve $C$ in the two cases above. 
\begin{figure}[h]
    \centering
      \psfrag{R}{$R_1$}
     \psfrag{S}{$R_2$}
      \psfrag{C}{$C_1$}
     \psfrag{t}{$\sigma_2$}
      \psfrag{s}{$\sigma_1$}
       \psfrag{D}{$C_2$}           
       \includegraphics[scale=0.8]{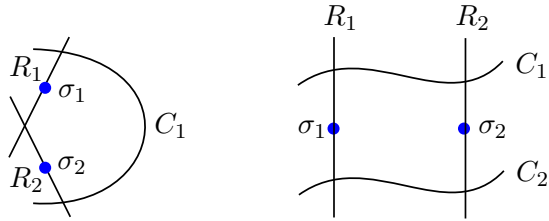}
    \caption{\label{fig:hyp-II-2} The underlying curve $C$ from left to right for cases (1) and (2) in Lemma~\ref{lem:hyp-2}.}
      \end{figure}

\begin{proof}
The proof is similar to the previous one. Suppose $(C, \eta)$ is in $\Delta^{\hyp}(g-1, g-1, \cC)$. Then it admits a hyperelliptic admissible double cover 
$\pi$. Since $\eta$ has two zeros $\sigma_1$ and $\sigma_2$, there are two rational components $R_1$ and $R_2$ in $C$, each containing one zero. Moreover, $\sigma_1$ and $\sigma_2$ are conjugates under $\pi$, hence the degree of $\pi$ restricted to each $R_i$ is one. Consequently there is no tail attached to $R_i$, for otherwise the attaching point in $R_i$ would be a ramification node of $\pi$ by definition of admissible cover. 

Based on the above constraints, there are two possibilities for $\pi$ as follows. Let $p_i$ and $q_i$ be the two nodes of $R_i$. First, if $R_1$ and $R_2$ meet at one node, say, by identifying $p_1$ with $p_2$, then there is another component $C_1$ that joins $R_1$ and $R_2$ at $q_1$ and $q_2$, respectively, which gives case (1). If $R_1$ and $R_2$ are disjoint, then there must be two components $C_1$ and $C_2$, where each $C_i$ connects $R_1$ and $R_2$ at $p_i$ and $q_i$, respectively, which is case (2). Finally notice that $R_1$ and $R_2$ cannot intersect at both nodes, for otherwise there is no other component, and the genus of $C$ would be one. Hence the above two cases are the only possibilities. By analyzing the corresponding admissible cover in each case, we see that the newly added components along with their differentials satisfy the desired claim. 

Conversely if $(C, \eta)$ is one of the two cases, one can easily construct the corresponding hyperelliptic admissible cover, and we omit the details. 
\end{proof}

\section{Principal boundary for quadratic differentials}
\label{sec:quad}

In \cite{MZ} Masur and Zorich carried out an analogous description for the principal boundary of moduli spaces of quadratic differentials, which parameterizes quadratic differentials 
with a prescribed generic configuration of short \^{h}omologous saddle connections, where ``\^{h}omologous'' is defined by passing to the canonical double cover (see \cite[Definition 1]{MZ}). The combinatorial structure of configurations of \^{h}omologous saddle connections is described in terms of \emph{ribbon graphs} (see \cite[Figure 6]{MZ}), which can be used as building blocks to construct a flat surface in the principal boundary. 

As the lengths of these \^{h}omologous saddle connections approach zero, we can also describe the principal boundary of limit differentials by using \emph{twisted quadratic differentials} (in the sense of twisted $k$-differentials in \cite{BCGGM2} for $k=2$). The definition of twisted quadratic differentials is almost the same as that of twisted abelian differentials, with one exception that the zero or pole orders on the two branches at every node sum to $-4$.  

Since the idea of describing the principal boundary is similar and only the combinatorial structure gets more involved, we will explain our method by going through a number of examples, in which almost all typical ribbon graphs appear. Consequently the method can be adapted to any given configuration without further difficulties. 

\subsection{Ribbon graphs of configurations}

We briefly recall the geometric meaning of the ribbon graphs (see \cite[Section 1]{MZ} and \cite[Section 2]{Goujard}) for more details). A ribbon graph captures the information of boundary surfaces after removing the \^{h}omologous saddle connections in a given configuration and how these boundary surfaces are glued to form the original surface. A vertex labeled by $\circ$, $\oplus$ or $\ominus$ in the graph represents a cylinder, a boundary surface of trivial holonomy or a boundary surface of non-trivial holonomy, respectively. Here whether or not the holonomy is trivial corresponds to whether or not the quadratic differential is the square of an abelian differential. An edge joining two vertices represents a common saddle connection on the boundaries of the corresponding two surfaces. The boundary of a ribbon graph is decorated by integers that encode the information of cone angles between consecutive \^{h}omologous saddle connections. Each vertex is decorated by a set of integers (possibly empty) that encodes the type of singularities in the interior of the corresponding boundary surface. Connected components of the boundary of a ribbon graph correspond to newborn zeros after gluing the boundary surfaces together. 

\subsection{Configurations in genus $2$}

In \cite[Appendix B]{MZ} Masur and Zorich described explicitly configurations of  \^{h}omologous saddle connections for holomorphic quadratic differentials in genus $2$. Below we will describe the corresponding principal boundary of limit twisted quadratic differentials for the three configurations of the stratum $\cQ(2,2)$ (see \cite[Figure 22]{MZ}). 

The first ribbon graph on the left of \cite[Figure 22]{MZ} corresponds to a flat surface on the left of Figure~\ref{fig:quad-1}. 
 If the saddle connection $\gamma$ shrinks to a point, we obtain a flat surface $(E, \eta_E) \in \cQ(2, -1, -1)$ where the two simple poles are identified as one point. 
 Alternatively, cutting the surface open along $\gamma$, we obtain a surface with two boundary components $\gamma'$ and $\gamma''$. If we expand the neighborhoods of $\gamma'$ and $\gamma''$ to arbitrarily large, it gives a meromorphic quadratic differential $(R, \eta_R) \in \cQ(2, -3, -3)$, since the flat geometric neighborhood of a triple pole of a quadratic differential corresponds to a (broken) half-plane. Combining them together, we conclude that the underlying pointed stable curve $C$ of the limit differential consists of $E$ union $R$ at two nodes, where both $E$ and $R$ contain a marked double zero, see the right side of Figure~\ref{fig:quad-1}.  
 Conversely given such $C$ and $\eta = (\eta_E, \eta_R)$, since $\eta$ is a twisted quadratic differential and satisfies the global residue condition in \cite{BCGGM3}, $(C, \eta)$ can be smoothed into the Masur-Zorich principal boundary for this configuration. 
 
 \begin{figure}[h]
    \centering
     \psfrag{r}{$\gamma$}
      \psfrag{R}{$R$}
       \psfrag{E}{$E$}
    \includegraphics[scale=0.8]{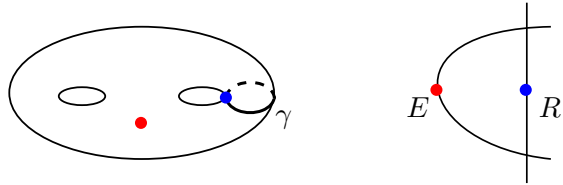}
    \caption{\label{fig:quad-1} The surface corresponding to the first ribbon graph on the left of  \cite[Figure 22]{MZ} and the underlying curve of its degeneration as $\gamma \to 0$.}
      \end{figure}

The second ribbon graph on the left of \cite[Figure 22]{MZ} corresponds to a flat surface on the left of Figure~\ref{fig:quad-2}. 
 When the saddle connections $\gamma_i$ shrink, the three cylinders all become arbitrarily long, hence they give rise to three nodes, each of which is of pole type $(-2, -2)$ in terms of twisted quadratic differentials (or of pole type $(-1, -1)$ in terms of twisted abelian differentials locally). Moreover, the node $q_0$ in the middle is separating, because removing the core curve of the middle cylinder disconnects the surface. Similarly we see that the other two nodes $q_1$ and $q_2$ are non-separating. Therefore, we conclude that the underlying pointed stable curve $C$ of the limit differential consists of two nodal Riemann spheres $R_1$ and $R_2$, where each $(R_i, \eta_i) \in \cQ(2, -2, -2, -2)$ has the last two poles identified as $q_i$ and $R_1, R_2$ are glued by identifying their first poles as $q_0$, see the right side of Figure~\ref{fig:quad-2}.  
 In addition, the half-infinite cylinders corresponding to $\eta_i$ at $q_i$ for $i = 1, 2$ have identical widths, both equal to one-half of the width of the half-infinite cylinders at $q_0$. Conversely given such $C$ and $\eta = (\eta_1, \eta_2)$, the width condition in this case is precisely the matching residue condition in the definition of twisted $k$-differentials in \cite{BCGGM3}, hence $(C, \eta)$ can be smoothed into the Masur-Zorich principal boundary for this configuration. 
 
  \begin{figure}[h]
    \centering
     \psfrag{r}{$\gamma$}
      \psfrag{R}{$R_1$}
       \psfrag{S}{$R_2$}
       \psfrag{a}{$q_1$}
       \psfrag{b}{$q_2$}
       \psfrag{c}{$q_0$}
       \psfrag{1}{$\gamma_1$}
        \psfrag{2}{$\gamma_2$}
        \psfrag{3}{$\gamma_3$}
         \psfrag{4}{$\gamma_4$}
    \includegraphics[scale=0.8]{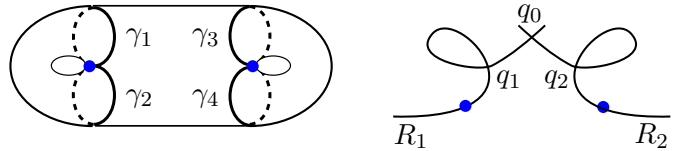}
    \caption{\label{fig:quad-2} The surface corresponding to the second ribbon graph on the left of \cite[Figure 22]{MZ} and the underlying curve of its degeneration as $\gamma_i\to 0$.}
      \end{figure}

The last ribbon graph on the left of \cite[Figure 22]{MZ} corresponds to a flat surface on the left of Figure~\ref{fig:quad-3}. The local picture around each saddle connection $\gamma_i$ is the same as in the first ribbon graph. Hence 
 the underlying pointed stable curve $C$ of the limit differential consists of three rational components $R_0$, $R_1$ and $R_2$, where 
 each of $R_1$ and $R_2$ contains a marked zero and meets $R_0$ at two nodes, see the right side of Figure~\ref{fig:quad-3}.  
 Moreover, the limit twisted quadratic differential $\eta = (\eta_0, \eta_1, \eta_2)$ satisfies that $(R_0, \eta_0) \in \cQ(-1, -1, -1, -1)$ with simple poles at the nodes and $(R_1, \eta_1) \cong (R_2, \eta_2) \in \cQ(2, -3, -3)$ with triple poles at the nodes.  Conversely given such $(C, \eta)$, again by \cite{BCGGM3} it can be smoothed into the Masur-Zorich principal boundary for this configuration. 
 
  \begin{figure}[h]
    \centering
     \psfrag{r}{$\gamma_1$}
       \psfrag{s}{$\gamma_2$} 
       \psfrag{1}{$R_1$}
        \psfrag{2}{$R_2$}
        \psfrag{0}{$R_0$}
           \includegraphics[scale=0.8]{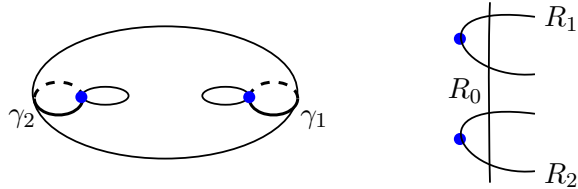}
    \caption{\label{fig:quad-3} The surface corresponding to the last ribbon graph on the left of \cite[Figure 22]{MZ} and the underlying curve of its degeneration as $\gamma_i\to 0$.}
      \end{figure}

\subsection{A configuration in genus $13$}
 
 We will convince the reader that our method works equally well in the case of high genera by considering an example in genus $13$ in \cite[Figure 7]{MZ}. The underlying pointed stable curve $C$ of the limit differential as $\gamma_i \to 0$ consists of seven components $S_1, \ldots, S_5, R_1, R_2$ meeting as described in Figure~\ref{fig:MZ-13}. 
 
   \begin{figure}[h]
    \centering
     \psfrag{1}{$S_1$}
       \psfrag{2}{$S_2$}
        \psfrag{3}{$S_3$}
    \psfrag{4}{$S_4$}
     \psfrag{5}{$S_5$}
       \psfrag{A}{$R_1$}
        \psfrag{B}{$R_2$}
           \includegraphics[scale=0.8]{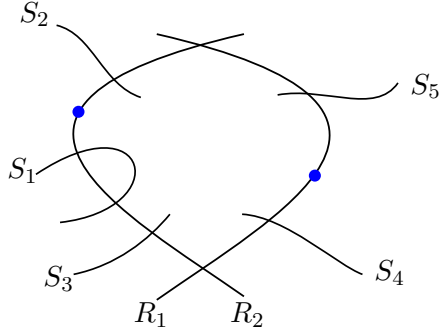}
    \caption{\label{fig:MZ-13} The underlying curve of the degeneration of the surface corresponding to \cite[Figure 7]{MZ} as $\gamma_i\to 0$.}
      \end{figure}

 The $S_i$ components are non-degenerate and carry holomorphic differentials that were already described in \cite[p. 939]{MZ}. The rational component $R_1$ contains the marked zero of order $30$ and 
 carries a differential $\eta_1 \in \cQ(30, -2, -2, -4, -4, -6, -16)$. The other rational component $R_2$ contains the marked zero of order $8$ and 
 carries a differential $\eta_2 \in \cQ(8, -2, -2, -4, -4)$. As before, the half-infinite cylinders for $\eta_1$ and $\eta_2$ at the nodes of their intersection have equal widths. 
 
Let us explain how the components $R_1, R_2$ and their poles appear. The two $\circ$ vertices in the ribbon graph correspond to two cylinders. As they tend to arbitrarily long, we obtain the two nodes with double poles between $R_1$ and $R_2$. Removing $\gamma_4$ and $\gamma_8$ simultaneously  disconnects the curve, which gives rise to $R_1$. Similarly removing $\gamma_5$ and $\gamma_7$ disconnects the curve, hence it gives rise to $R_2$. 
The surface $S_1$ corresponds to the central $\oplus$ vertex, whose boundary has two connected components given by the two connected components 
of its local ribbon graph. Going around each connect boundary component takes a total angle of $2\pi$ by the number decorations, hence the expansion of its local neighborhood to arbitrarily large consists of a pair of (broken) half-planes. It follows that $S_1$ meets $R_1$ at two nodes, both having pole order $4$ for the limit twisted quadratic differential on $R_1$. The intersections of the other $S_j$ with $R_i$ can be analyzed in the same way.

\bibliographystyle{amsalpha}             
\bibliography{Principal}

\end{document}